\documentclass[a4paper]{amsart}

\usepackage[utf8]{inputenc}
\usepackage[english]{babel}
\usepackage{stmaryrd,mathtools,commath,mathrsfs}
\usepackage{bbm} 
\usepackage{amssymb,amsthm}
\numberwithin{equation}{section}
\usepackage{float}
\usepackage{graphicx}
\usepackage{yfonts,verbatim,calrsfs,enumitem}
\usepackage{comment}
\usepackage{todonotes}

\newlist{myenumerate}{enumerate}{1}
\setlist[myenumerate]{label=\textup{(\roman*)},leftmargin=*}

\usepackage{amsmath}
\usepackage{xcolor}
\definecolor{cornellred}{rgb}{0.7,0.11,0.11}

\usepackage{cite}
\usepackage[colorlinks=false, allcolors=cornellred, pdfpagelabels]{hyperref}

\allowdisplaybreaks

\theoremstyle{plain}
\newtheorem{theorem}{Theorem}[section]
\newtheorem{lemma}[theorem]{Lemma}
\newtheorem{proposition}[theorem]{Proposition}
\newtheorem{corollary}[theorem]{Corollary}

\theoremstyle{definition}
\newtheorem{definition}[theorem]{Definition}
\newtheorem*{definition*}{Definition}

\newtheorem{notation}[theorem]{Notation}

\DeclareMathAlphabet{\mathcal}{OMS}{cmsy}{m}{n}

\newcommand{\Cs}{\ensuremath{\mathrm{C}^\ast}}

\newcommand{\id}{\ensuremath{\mathrm{id}}}
\renewcommand{\ker}{\ensuremath{\mathrm{ker}}}

\newcommand{\U}{\ensuremath{\mathcal{U}}}
\newcommand{\K}{\ensuremath{\mathcal{K}}}

\newcommand{\calZ}{\ensuremath{\mathcal{Z}}}
\newcommand{\calU}{\ensuremath{\mathcal{U}}}

\newcommand{\Cus}{\ensuremath{\mathrm{Cu}}}

\newcommand{\ev}{\ensuremath{\mathrm{ev}}}

\newcommand{\cal}[1]{\ensuremath{\mathcal{#1}}}
\newcommand{\leb}{\ensuremath{\mathrm{Leb}}}

\usepackage{tikz}
\usetikzlibrary{arrows,matrix,positioning,automata}
\usepackage{quiver}
\usepackage{pgfplots}

\setlist[enumerate,itemize]{noitemsep} 

\begin{document}

\title[Unital embeddings of the Jiang--Su algebra are not unique]{Unital embeddings of the Jiang--Su algebra are not unique}

\author{Matteo Pagliero}
\address{Department of Mathematics and Computer Science, University of Southern Denmark, Campusvej 55, 5230 Odense, Denmark}
\email{pagliero@imada.sdu.dk}
\thanks{}

\subjclass[2020]{46L05, 46L35, 46L09, 46L35}

\keywords{Jiang--Su algebra, selfless \Cs-algebras, strict comparison, free product}

\date{}

\dedicatory{}

\begin{abstract}
We show that the canonical embeddings of the Jiang--Su algebra $\mathcal Z$ into the universal unital free product $\mathcal Z \ast_{\mathbb C}\mathcal Z$ are not approximately unitarily equivalent.  This answers a question appearing in one of the $99$ problems list of Schafhauser, Tikuisis, and White.
By contrast, we prove that any two such
embeddings into a selfless $\mathrm{C}^\ast$-algebra are strongly asymptotically unitarily equivalent.
\end{abstract}

\maketitle

\tableofcontents


\section{Introduction}
\renewcommand\thetheorem{\Alph{theorem}}

This article is concerned with the uniqueness problem for unital embeddings of the Jiang--Su algebra $\mathcal Z$, which was introduced in \cite{JS99}, into unital \Cs-algebras. More precisely, Schafhauser--Tikuisis--White explicitly
asked in \cite[Problem~LXIII]{STW25} whether any two unital embeddings of $\mathcal Z$
into an arbitrary unital \Cs-algebra are approximately,
or even asymptotically, unitarily equivalent.

In this article, we answer this question negatively using the universal unital free product $\calZ\ast_{\mathbb C}\calZ$ as target, which is in particular a quasidiagonal \Cs-algebra by \cite[Proposition 13]{Boc97}.

\begin{theorem}\label{introthm:nonunique-Z-embeddings}
The two canonical unital embeddings of $\calZ$ into the universal unital free product \Cs-algebra $\calZ\ast_{\mathbb C}\calZ$ are not approximately unitarily equivalent.
\end{theorem}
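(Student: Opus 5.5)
The plan is to find an invariant that is preserved by approximate unitary equivalence but separates the two canonical embeddings $\iota_1,\iota_2\colon\calZ\to\calZ\ast_{\mathbb C}\calZ$. K-theory will not do: $K_*(\calZ)=K_*(\mathbb C)$ and both maps are unital. Traces will not do either: $\calZ$ has a unique trace, so every trace on the free product restricts to it along both $\iota_1$ and $\iota_2$, and all tracial rank functions agree.

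I would use the Cuntz semigroup. If $\iota_1\approx_{\mathrm{au}}\iota_2$, then $\Cus(\iota_1)=\Cus(\iota_2)$, since $\iota_2(a)$ is a norm limit of unitary conjugates of $\iota_1(a)$ for every positive $a$ in $\calZ\otimes\K$. It therefore suffices to exhibit a positive $a$ and an $\varepsilon>0$ such that $\iota_2(a)\not\precsim\iota_1((a-\varepsilon)_+)$, or some similar failure. Because traces cannot detect this, the failure must be a genuinely non-tracial phenomenon. In other words, it is an instance of $\calZ\ast_{\mathbb C}\calZ$ failing strict comparison on elements coming from the two copies.

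To certify such a failure I would push everything forward. By the universal property of the unital free product, any pair of unital $^*$-homomorphisms $\varphi_1,\varphi_2\colon\calZ\to B$ gives $\varphi_1\ast\varphi_2\colon\calZ\ast_{\mathbb C}\calZ\to B$ with $(\varphi_1\ast\varphi_2)\circ\iota_j=\varphi_j$. Functoriality of $\Cus$ then shows it is enough to find a unital \Cs-algebra $B$ and two unital embeddings $\varphi_1,\varphi_2$ with $\Cus(\varphi_1)\neq\Cus(\varphi_2)$. This reduces the theorem to a concrete construction in a target where Cuntz comparison can actually be computed.

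For $B$ I would try a Villadsen-type or homogeneous algebra. A natural first candidate is $B=C(X)\otimes\calZ$, or a limit of matrices over $C(X)$ with high-dimensional $X$, for instance products of spheres. The idea is to take $\varphi_1(z)=1\otimes z$, and to take $\varphi_2$ to be the embedding obtained by first realizing $\calZ$ as an inductive limit of dimension-drop algebras $Z_{p,q}$, and then sending the building blocks into $M_n(C(X))$ via maps whose eigenvalue patterns are twisted by nontrivial vector bundles, as in Villadsen's construction. A positive element $a\in\calZ$ with full spectrum should then have $\varphi_1(a)$ and $\varphi_2(a)$ with equal rank functions. However, the support of $\varphi_2(a)$ would carry a bundle with a nonvanishing Euler or Chern class obstruction, which prevents $\varphi_2(a)\precsim\varphi_1(a)$. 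One would detect this through the Cuntz classes of $\varphi_1(a)$ and $\varphi_2(a)$ evaluated on cut-downs $(\cdot-\varepsilon)_+$.

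I expect the main obstacle to be this step: building a unital embedding of the simple, projectionless algebra $\calZ$ into a target with such topological perforation, and carrying the characteristic-class obstruction through the inductive limit. The twisting has to survive the connecting maps of $\calZ$, which forces dimension growth to be controlled against the Villadsen-type obstruction.

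If this route proves too rigid, the fallback is to work directly inside $\calZ\ast_{\mathbb C}\calZ$. There I would exploit freeness to show that some $\iota_1(a)$ and $\iota_2(a)$ cannot be Cuntz compared. One way is to find a representation of the free product, such as a free product of suitable representations of $\calZ$, in which the two images have spectral projections in non-equivalent positions, and then use lower semicontinuity of the resulting Cuntz-type functional.
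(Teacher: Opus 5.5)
Your reduction is correct and is the one the paper uses: composing with $\varphi_1\ast\varphi_2$, it suffices to find a unital \Cs-algebra $B$ and two unital maps $\calZ\to B$ that are not approximately unitarily equivalent. The gap is that the proposal stops where the proof has to start. You never produce $B$, the second embedding, or the obstruction, and the candidates you name do not lead there.

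\begin{itemize}
\item The candidate $C(X)\otimes\calZ$, where $\varphi_1=1\otimes\id_{\calZ}$ lives, is ruled out outright. It is $\calZ$-stable, and any two unital embeddings of $\calZ$ into a $\calZ$-stable algebra are strongly asymptotically, hence approximately, unitarily equivalent by Dadarlat--Winter.
\item The Villadsen-type route needs twisted maps on dimension-drop building blocks that are compatible with the connecting maps of $\calZ$. You flag this as the main obstacle but do not resolve it.
\end{itemize}

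The missing idea is to avoid inductive limits altogether, using that $\calZ$ is MF with prime-power matrix sizes: $\calZ\subseteq M_{p^\infty}\subseteq\prod_\omega M_{p^n}$. Then one only needs two unital maps on each matrix algebra into $C(\mathcal{PU}(N),M_N)$, namely $\sigma_N(a)=a$ and $\lambda_N(a)([v])=v^*av$. These induce $\sigma,\lambda$ on the ultraproduct with no compatibility issue.

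The twist sits in the eigenvector bundles, not in eigenvalue patterns; the eigenvalues of $\lambda_N(a)$ are constant. The obstruction is that there is no $\mathbb T$-equivariant map $\mathcal U(N)\to S^{2r-1}$ for $r<N$ when $N$ is a prime power. This yields $\|\sigma_N(a)-x\lambda_N(b)x^*\|\geq\|a\|$ for all positive $a$ and every $x$, whenever $b\geq0$ is non-invertible. Hence $\|w\iota_1(h)w^*-\iota_2(h)\|=1$ for every unitary $w$ and every positive, norm one, non-invertible $h$. This is a direct norm gap, and no Cuntz semigroup is needed.

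Two smaller points. First, your stated sufficient condition has the relation the wrong way round. $\iota_2(a)\not\precsim\iota_1((a-\varepsilon)_+)$ is compatible with $\Cus(\iota_1)=\Cus(\iota_2)$. In fact it holds even when $\iota_1=\iota_2$, for any $a$ with $\operatorname{sp}(a)=[0,1]$ and $0<\varepsilon<1$, as any trace on the free product shows. Since $\|u\iota_1(a)u^*-\iota_2(a)\|<\varepsilon$ forces $\iota_1((a-\varepsilon)_+)\precsim\iota_2(a)$, the witness you need is $\iota_1((a-\varepsilon)_+)\not\precsim\iota_2(a)$, or simply $\iota_1(a)\not\precsim\iota_2(a)$.

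With that correction your invariant does separate the embeddings, but only through the same topological lemma. Applied with $a=b$ non-invertible, it gives $\sigma(h)\not\precsim\lambda(h)$, so $\Cus(\iota_1)\neq\Cus(\iota_2)$. With orthogonal $a,b$ it even shows that $\calZ\ast_{\mathbb C}\calZ$ fails strict comparison.

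Second, your fallback via spectral projections in some representation is unlikely to work. In a finite von Neumann algebra, the centre-valued trace of a spectral projection of $\iota_j(a)$ is determined by $\tau_{\calZ}$, hence is the same for $j=1,2$. So the obstruction cannot be seen there; it is genuinely norm-topological, which is exactly what the $\mathcal{PU}(N)$-parametrised target captures.
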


In fact, in Theorem \ref{thm:non-unique-embeddings} we show that the same conclusion is true if one replaces $\calZ$ with any unital, non-trivial, matricial field (MF) algebra whose ambient matrix algebras have size equal to powers of prime numbers. It is worth noting that the obstruction that allows us to prove Theorem~\ref{introthm:nonunique-Z-embeddings} ultimately arises from algebraic topology, exploiting a connection to \Cs-algebraic regularity similar to that observed in \cite{Vil98,TW09}. With regards to this cross-contamination of mathematical disciplines, the most significant contribution of the article is perhaps that the obstruction can be elegantly packaged in the form of point-wise distance between maps as explained in Lemma~\ref{lem:obstruction-lemma}.
We remark that the example established in Theorem~\ref{introthm:nonunique-Z-embeddings} is in stark contrast with the purely infinite case. Indeed, by \cite[Proposition 4.4]{Gab20} (see also \cite{BG24}), unital embeddings of the Cuntz algebra $\mathcal O_\infty$ \cite{Cun77} into any unital \Cs-algebra are asymptotically unitarily equivalent. 
We also show in Theorem \ref{thm:obstruction-Cuntz} that the same reason underpinning the non-equivalence of $\iota_0$ and $\iota_1$ produces an obstruction to strict comparison of positive elements in $\calZ\ast_{\mathbb C}\calZ$. This seems to be a promising approach to build \Cs-algebras failing strict comparison. 

In the second part of the article, we address the problem of identifying which assumptions on the target yield uniqueness of embeddings of $\calZ$ up to strong asymptotic unitary equivalence.
For a $\mathcal Z$-stable \Cs-algebra $A$, which means that $A\cong A\otimes\calZ$, strong asymptotic unitary
equivalence of unital embeddings $\mathcal Z \to A$ is provided in
\cite{DW09}.
This is also in line with classification theory. In fact, summarising an astonishing theorem that rests on the work of many authors 
\cite{Win14,Win16,GLN20a,GLN20b,EGLN25,TWW17,CETWW21,CGSTW23,Kir95,KP00,Phi00},  simple,
separable, unital, nuclear, $\mathcal Z$-stable \Cs-algebras
that satisfy the UCT are classified by their Elliott invariant.
In absence of $\mathcal Z$-stability, Robert's classification of homomorphisms from
one-dimensional NCCW complexes provides approximate unitary
equivalence of unital embeddings of $\calZ$ for certain regular targets
\cite[Proposition~6.3.1]{Rob12}. More precisely, the codomain is allowed to be simple, unital, non-elementary, of stable rank one \cite{Rie83}, with a unique 2-quasitracial state and strict comparison of positive elements \cite{Bla88}. 
In particular, this applies to \Cs-algebras that are tracial and selfless.

Inspired by the work of Popa \cite{Pop95}, Dykema and R{\o}rdam \cite{DR98,DR00}, in \cite{Rob25} Robert introduced the notion of a 
\emph{selfless} \Cs-probability space that allows one to produce regular \Cs-algebras via the reduced free product construction of Avitzour \cite{Avi82} and Voiculescu \cite{Voi85}. In the same work, he also describes selflessness in the model theoretic framework for \Cs-algebras \cite{FHLRTVW21,Gol23}. A \Cs-probability space $(A,\rho)$, which is a unital \Cs-algebra with a distinguished GNS-faithful state, is
selfless if a nontrivial free product extension of $(A,\rho)$
embeds into a norm ultrapower of $(A,\rho)$ while fixing
the diagonal copy of $A$ (see Section~\ref{section-selfless} for more details). In the tracial case, selflessness
implies simplicity, uniqueness of the trace, stable rank one,
and strict comparison  as observed in \cite[Theorem 3.1]{Rob25}.
Hence, one may regard selflessness as a
means of obtaining regularity properties beyond the nuclear
setting. We remark that an analogous way to obtain regularity was also studied in \cite{BS16}.
Moreover, Robert has very recently
proved a converse to the aforementioned statement: an infinite-dimensional, simple, unital,
monotracial \Cs-algebra with strict comparison by its trace
is selfless \cite[Theorem 1.1]{Rob26}. This completed the general characterisation of selflessness, as for non-tracial \Cs-probability spaces, selflessness is equivalent to being simple and purely infinite by \cite{Ozawa25,Rob25,Gould26}. 
The class of known selfless \Cs-algebras has grown rapidly.
Amrutam--Gao--Kunnawalkam Elayavalli--Patchell used
selflessness to establish strict comparison for certain reduced
group \Cs-algebras, including the striking example $\Cs_r(\mathbb F_2)$ \cite{AGKP25}.
Further group \Cs-algebras examples and methods were developed in
\cite{Vig25,Ozawa25,KPT25,Vig26,GJKPR26,HKEPR25}, with related boundary
and quantitative group-theoretic results in
\cite{Yang25,BF26,Arimoto26}.
Twisted group \Cs-algebras and group extensions are studied
in \cite{RTV25,FFKP26b,Omland26,FG26boundary}.
Moreover, reduced free products, graph products, and amalgamated free
products provide further classes of
examples \cite{HKR25,FFKP26a,GKPT26}.

When the target is selfless and tracial, it follows from the aforementioned result of Robert that there exists a unique unital embedding of $\calZ$ up to approximate unitary equivalence.
Our contribution to this part of \cite[Problem~LXIII]{STW25} is to promote the uniqueness result to \emph{strong asymptotic unitary equivalence} of unital embeddings of $\mathcal Z$.
When the target is purely infinite, we use Gabe's classification of morphisms from \cite{Gab24} to upgrade asymptotic unitary equivalence, which is directly provided by Gabe's work, to \emph{strong} asymptotic unitary equivalence.
Hence, we obtain the following: 

\begin{theorem}\label{introthm:asymptotic-uniqueness}
Let $(A,\rho)$ be a selfless \Cs-probability space.
Then any two unital embeddings $\varphi,\psi\colon\calZ\to A$ are strongly
asymptotically unitarily equivalent, that is, there exists a
norm-continuous path $(u_t)_{t\geq0}\subseteq\mathcal U(A)$,
such that  $u_0=1_A$, and
\[
    \lim_{t\to\infty}
    \|u_t\varphi(z)u_t^*-\psi(z)\|=0
    \qquad\text{for every }z\in\mathcal Z.
\]
\end{theorem}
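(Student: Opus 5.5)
The plan is to split according to the dichotomy for selfless \Cs-probability spaces recalled above. Either $\rho$ is a trace, and then $A$ is simple, monotracial, of stable rank one and has strict comparison by \cite[Theorem 3.1]{Rob25}; or $\rho$ is not tracial, and then $A$ is simple and purely infinite by \cite{Ozawa25,Rob25,Gould26}. In both cases the first step produces approximate, or even asymptotic, unitary equivalence of $\varphi$ and $\psi$. The second step upgrades this to \emph{strong} asymptotic unitary equivalence, i.e.\ a path starting at $1_A$.

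\textbf{Tracial case.} Robert's classification \cite[Proposition~6.3.1]{Rob12} gives $\varphi\approx_u\psi$. To obtain a continuous path I would realise $\calZ$ as an inductive limit of prime dimension drop algebras $Z_{p,q}$ with unital connecting maps. I would then apply Robert's uniqueness theorem on each $Z_{p,q}$, with finite sets and tolerances $\varepsilon_n\to0$, to get unitaries $w_n$ with $w_n\varphi(\cdot)w_n^*\approx\psi$ on larger and larger finite sets. The key point is to interpolate between $w_n$ and $w_{n+1}$ along a path of unitaries that almost commutes with $\psi$ on the relevant finite set. The unitary $v_n=w_{n+1}w_n^*$ approximately commutes with $\psi(F_n)$, so the task is to connect $v_n$ to $1$ inside the relative commutant modulo small error. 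Here I would use selflessness directly. The free product extension $A\ast\calZ$ (or $A\ast C$ for a suitable free factor) embeds into $A^\omega$ fixing $A$. In the free product, a unitary from the free factor gives room for a rotation trick: conjugating by a Haar-unitary-type element that is free from $\psi(\calZ)$ should connect $v_n$ to $1$ through unitaries almost commuting with $\psi(F_n)$, with length controlled uniformly. Reindexing from $A^\omega$ back to $A$ then gives paths in $A$. Concatenating these paths yields $(u_t)$ with $u_0=1$.

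\textbf{Purely infinite case.} Gabe's classification \cite{Gab24} gives asymptotic unitary equivalence via a path $(u_t)_{t\ge1}$. I would correct it to start at $1$: the class $[u_1]\in K_1(A)$ is the only obstruction. I would multiply the path by a unitary in $\psi(\calZ)'\cap A$ of class $-[u_1]$, found using $\calO_\infty$-absorption behaviour: since $A$ is purely infinite simple, the relative commutant of the image of the separable $\calZ$ in $A_\omega$ contains unitaries with every $K_1$-class. Alternatively, I would use Gabe's statement that the induced $KK$-type invariant of the path vanishes.

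I expect the main obstacle to be the tracial homotopy step. It requires that approximately commuting unitaries lie in the path component of $1$ in an approximate commutant, with uniform length bounds. This is where selflessness, rather than merely strict comparison, must be used, presumably by exhibiting the free product so that $K_1$-type obstructions like the one in Lemma~\ref{lem:obstruction-lemma} vanish.
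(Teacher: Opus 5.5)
Your overall architecture matches the paper: the tracial versus purely infinite dichotomy, then upgrading to a path from $1_A$. However, the tracial case has a genuine gap exactly at the step you flag. Connecting $v_n=w_{n+1}w_n^*$ to $1$ through unitaries almost commuting with $\psi(F_n)$, with uniform length control, is a basic homotopy lemma, and the proposal gives no argument for it. The mechanism you suggest cannot supply one. A Haar unitary $u$ from the free factor is free from $\psi(\calZ)$, hence far from commuting with it: for $a$ of trace zero, freeness gives $\|ua-au\|_2^2=2\|a\|_2^2$. Conjugating $v_n$ by $u$ therefore produces a unitary almost commuting with $u\psi(F_n)u^*$, not with $\psi(F_n)$. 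You also leave a $K_1$-obstruction unaddressed. $K_1(A)$ can be nonzero for tracial selfless $A$ (e.g.\ $\Cs_r(\mathbb F_2)$), and the unitaries $w_n$ from Robert's uniqueness theorem carry no control on their $K_1$-classes. If $[v_n]_1\neq0$, then $v_n$ is not connected to $1$ in $\mathcal U(A)$ at all.

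The paper does not repair approximately intertwining unitaries after the fact. Instead it produces paths starting at $1$ from the outset, solving the homotopy problem once in a universal algebra with no $K_1$. The algebra $D=(\calZ,\tau_{\calZ})\ast(\calZ,\tau_{\calZ})$ is simple, has stable rank one, and satisfies $K_1(D)=0$. For it, Theorem~\ref{thm:Cu-asymptotic-uniqueness} (Robert's classification applied to $C([0,1],D)$ and the pullback $E$) makes the two free copies strongly asymptotically unitarily equivalent via a path $q_t$ with $q_0=1$. In the tracial case, selflessness enters only through Lemma~\ref{lem:product-Z}, which embeds $(A,\rho)\ast(\calZ,\tau_{\calZ})$ into $A_\omega$ over $A$. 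The Blanchard--Dykema maps $j_\alpha$ then give, for every unital $\alpha\colon\calZ\to A$, a path $p^\alpha_t=j_\psi(q_t)^*j_\alpha(q_t)$ from $1$ conjugating $\alpha$ asymptotically onto $\psi$; both are moved onto the same free copy $\kappa(\calZ)$. The estimates are uniform in $\alpha$ and continuous in $\alpha$ on compact time intervals. Lemma~\ref{lem:transfer-unitary-path} lifts finite pieces of these paths to $\mathcal U(A)$, and an inductive gluing finishes the proof.

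In the purely infinite case, your idea of cancelling $[u_1]_1$ by a unitary commuting with the image is the right one, but you do not justify where that unitary comes from. Unitaries of $\mathcal O_\infty$ have trivial $K_1$-class, so $\mathcal O_\infty$-absorption does not produce prescribed classes. Moreover, a unitary in $A_\omega\cap\psi(\calZ)'$ does not yield a unitary, or a continuous path, in $A$. Writing $(r_t)_{t\geq0}$ for Gabe's path, the paper applies Gabe's existence theorem to $\calZ\otimes C(\mathbb T)$ with the $KK_{\mathrm{nuc}}$-class determined by $\eta(\id_{\mathbb T})=r_0^*$. This gives $\theta\colon\calZ\to A$ and $v\in\mathcal U(A)\cap\theta(\calZ)'$ with $[v]_1=-[r_0]_1$. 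Uniqueness then provides $w_t$ with $w_t\theta(\cdot)w_t^*\to\varphi$, so $v_t=w_tvw_t^*$ asymptotically commutes with $\varphi(\calZ)$. Finally, $r_tv_t$ starts in $\mathcal U_0(A)$ by Cuntz's theorem.
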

 

\subsection*{Statement on the use of AI}
In Section~\ref{sec:obstruction}, ChatGPT5.6 was used to prove Lemma \ref{lem:equivariant-zero}, and to give an earlier proof of Lemma \ref{lem:matrix-orbit-gap}. It was also used to produce the maps $\sigma_N$ and $\lambda_N$, and an earlier version of the embeddings $\sigma,\lambda\colon \calZ\to\prod_{\omega}C(\mathcal{PU}(N_n),M_{N_n})$ in Theorem~\ref{thm:non-unique-embeddings}. Subsequently, the author proved, without any use of AI,  Lemma \ref{lem:obstruction-lemma}, and he identified the obstruction to strict comparison in Theorem~\ref{thm:obstruction-Cuntz}, again without any use of AI.

In Section~\ref{sec:asymptotic}, the mathematical content is entirely due to the human author, who used ChatGPT5.6 only to proofread and improve the presentation of his proofs of Lemma~\ref{lem:transfer-unitary-path} and Proposition~\ref{prop:free-product-asymptotic-uniqueness}.

The author takes full responsibility for the content of this article, and confirms that all mathematical contribution from AI was carefully checked by the human author.

\subsection*{Acknowledgements}
The author was supported by the Carlsberg Foundation grant CF24-1144. He would like to express his most sincere gratitude to Jamie Gabe for extremely valuable conversations concerning the topic of this article.


\renewcommand\thetheorem{\arabic{section}.\arabic{theorem}}

\section{Preliminaries}

In this section, we recall some of the terminology, definitions and known facts that are used throughout the article.

\subsection{Selfless \Cs-algebras}\label{section-selfless}
The notions of \Cs-probability space, and selflessness, are central to this article.

\begin{definition}
    A \Cs-probability space $(A,\rho)$ consists of  a unital \Cs-algebra $A$ and a state $\rho$ on $A$ whose GNS-representation is faithful, with the exception of ultrapowers, which are considered as \Cs-probability spaces even though they might be equipped with a non-GNS-faithful state.
\end{definition}

Throughout, by a homomorphism of \Cs-probability spaces, we mean a state preserving unital $\ast$-homomorphism.

\begin{notation}
    The following \Cs-probability spaces will often appear in the article, and hence we fix their notation, and use it without further mention,
    \begin{itemize}[leftmargin=*]
        \item $(C(\mathbb T),\tau_{\leb})$ is the \Cs-probability space generated by one Haar unitary, i.e., $\tau_{\leb}$ is the trace induced by integration against the Lebesgue measure;
        \item $(\calZ,\tau_{\calZ})$ is the Jiang--Su algebra, which was introduced in \cite{JS99}, equipped with its trace, whose uniqueness is proved in \cite[Theorem~2.9]{JS99};
        \item $(\Cs_r(G),\tau_G)$ is the reduced \Cs-algebra of $G$, for a discrete group $G$, and is equipped with the vector state of $\delta_{1_G}\in\ell^2(G)$.
    \end{itemize}
\end{notation}

\begin{definition}
Let $(A_i)_{i\in I}$ be a family of \Cs-algebras, and let
$\omega$ be an ultrafilter on $I$. Consider the \Cs-algebra
\[
    \prod_{i\in I}A_i
    :=
    \big\{
        (a_i)_{i\in I}
        \,\mid \,
        a_i\in A_i,\ 
        \sup_{i\in I}\|a_i\|<\infty
    \big\},
\]
with coordinatewise operations and the supremum norm.
The {(norm) ultraproduct} of $(A_i)_{i\in I}$ with
respect to $\omega$ is the quotient \Cs-algebra 
\[
    \prod_{\omega}A_i
    :=
    \frac{\prod_{i\in I}A_i}{\{
        (a_i)_{i\in I}
        \,\mid \,
        \lim_{i\to\omega}\|a_i\|=0
    \}}.
\]
We denote the class of a bounded family $(a_i)_{i\in I}$
by $[(a_i)_i]_\omega$. Its norm satisfies
\[
    \bigl\|[(a_i)_i]_\omega\bigr\|
    =
    \lim_{i\to\omega}\|a_i\|.
\]
If each $A_i$ is unital, then $\prod_\omega A_i$ is
unital, with unit $[(1_{A_i})_i]_\omega$.

When $A_i=A$ for every $i\in I$, we write $A_\omega$
and call it the {(norm) ultrapower} of $A$.
We identify $A$ with its diagonal copy in $A_\omega$
via the diagonal embedding
\[
    A\longrightarrow A_\omega,
    \qquad
    a\longmapsto[(a)_{i\in I}]_\omega.
\]
    If $\rho$ is a state on $A$, we denote by $\rho_\omega$ the state on $A_\omega$ given by
    \[
    \rho_{\omega}([(a_i)_i]_\omega)=\lim_{i\to\omega}\rho(a_i).
    \]
\end{definition}

\begin{definition}[see {\cite[Definition~2.1 and Theorem~2.6]{Rob25} and \cite[Definition~2.3]{HKEPR25}}]
A \Cs-probability space $(A,\rho)$ is \emph{selfless} if
$A\not\cong\mathbb C$ and there exist a \Cs-probability space $(C,\kappa)$ with $C\not\cong\mathbb C$, an ultrafilter $\omega$, and an embedding
\[
    \sigma\colon (A,\rho)\ast(C,\kappa)
    \longrightarrow (A_\omega,\rho_\omega)
\]
whose restriction to the first factor is the diagonal embedding.
\end{definition}

\subsection{The Cuntz semigroup}\label{subsection-Cuntz-semigroup}

We use the Cuntz semigroup $\Cus(A)$ 
in the sense of \cite{CEI08}, and refer the reader to \cite{APT18,GP26} for a systematic
treatment. The underlying comparison relation on positive elements
originates in \cite{Cun78}. 

\begin{definition}
Let $A$ be a \Cs-algebra, and let $\K$ denote the compact operators on a separable, infinite-dimensional Hilbert space.
For $a,b\in(A\otimes\mathcal K)_+$,
write $a\precsim b$ if there exists a sequence $(x_n)_n\subset A\otimes\mathcal K$
such that $x_n^*bx_n\to a$ in norm, and write $a\sim b$ if both
$a\precsim b$ and $b\precsim a$ occur at the same time. The \emph{Cuntz semigroup} of $A$ is defined as
\[
    \Cus(A):=(A\otimes\mathcal K)_+/{\sim}.
\]
ordered by $[a]\leq[b]$ whenever $a\precsim b$, with addition
$[a]+[b]=[a\oplus b]$. A $\ast$-homomorphism $\varphi\colon A\to B$
induces the map
$\Cus(\varphi)([a])=[(\varphi\otimes\id_{\mathcal K})(a)]$.
\end{definition}

By \cite[Theorem~2]{CEI08}, this defines a functor into the category
of abstract Cuntz semigroups. Its objects are positively ordered abelian monoids
in which increasing sequences have suprema, every element is the
supremum of a sequence $(s_n)_n$ with $s_n\ll s_{n+1}$, and addition
preserves both increasing suprema and the relation $\ll$.
Here $s\ll t$ means that, whenever $t\leq\sup_n t_n$ for an increasing
sequence $(t_n)_n$, one has $s\leq t_n$ for some $n$.
Morphisms preserve zero, addition, order, increasing suprema, and
$\ll$. For unital algebras, we also retain the distinguished element
$[1_A]$, which is preserved by maps induced by unital
$\ast$-homomorphisms.

We record the following result of R{\o}rdam and Winter in the generality that is needed in the present work. The original conclusion is, in fact, stronger, and we refer the reader to \cite{RW10} for the general result.

\begin{theorem}[{see \cite[Proposition 4.2 and Theorem 4.3]{RW10} and \cite{Ell07}}]
    Let $A$ be a \Cs-algebra with stable rank one. Then, $\Cus(A)$ has cancellation of projections, i.e., for any $[a],[b],[p]\in\Cus(A)$, where $p$ is a projection, one has that $[a]+[p] \leq [b]+[p]$ implies $[a]\leq[b]$.
\end{theorem}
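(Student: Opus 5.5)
The plan is to reduce the statement to the cancellation theorem for finitely generated projective modules over rings of stable rank one, by comparing Cuntz classes inside a matrix amplification of $A$. Take $[a],[b],[p]$ with $p$ a projection and $[a]+[p]\le[b]+[p]$. By the standard approximation in $\Cus(A)$, it suffices to show $[(a-\varepsilon)_+]\le[b]$ for every $\varepsilon>0$. Here $(a-\varepsilon)_+$ denotes the cut-down $f_\varepsilon(a)$, and $[a]=\sup_\varepsilon[(a-\varepsilon)_+]$. After the usual compression we may assume $a,b,p$ lie in a corner $M_n(A)$ and are orthogonal. Since stable rank one passes to matrix algebras and to hereditary subalgebras, all later manipulations take place in a \Cs-algebra $B$ of stable rank one.

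First, I will use the key fact about stable rank one proved in \cite{RW10}. If $x\precsim y$ in $B$ with $B$ of stable rank one, then for each $\varepsilon>0$ there is a unitary $u$ in the unitization with
\[
u^*\overline{(x-\varepsilon)_+B}\,u\subseteq \overline{yB}.
\]
This comes from the polar decomposition trick: an element $z$ with $z^*z=(x-\varepsilon)_+$ and $zz^*\in \overline{yBy}$ is approximated by invertibles, and the unitary part of an invertible implements the inclusion. Apply this with $x=a\oplus p$ and $y=b\oplus p$, using a cut-down compatible with $p$. Since $p$ is a projection, $((a\oplus p)-\varepsilon)_+\sim (a-\varepsilon)_+\oplus p$ for $\varepsilon<1$. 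So we obtain a unitary $u$ such that $u\big((a-\varepsilon)_+\oplus p\big)u^*$ lies in the hereditary subalgebra $D$ generated by $b\oplus p$.

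Next comes the cancellation step. Inside $D$, the projections $q:=upu^*$ and $p$ are both full corners, and $[q]=[p]$ in $K_0$, since they are unitarily equivalent in the unitization. Because $D$ has stable rank one, $K_0$-cancellation holds there (Rieffel). Hence $q$ and $p$ are Murray--von Neumann equivalent inside $D$, and in fact unitarily equivalent via a unitary $v$ of $\widetilde D$, because $1-q$ and $1-p$ are also equivalent by cancellation. Replacing $u$ by $v^*u$, we may assume $u$ fixes $p$. Then $u(a-\varepsilon)_+u^*$ is orthogonal to $p$ and lies in $D$. Therefore it lies in $(1-p)D(1-p)$, which is the hereditary subalgebra generated by $b$. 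This gives $(a-\varepsilon)_+\precsim b$.

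The main obstacle is the middle step. We must make sure that the unitary produced by stable rank one conjugates the projection part onto something $K_0$-equivalent to $p$ within the correct hereditary subalgebra, and that "orthogonal to $p$ inside $\overline{(b\oplus p)B(b\oplus p)}$" really lands in $\overline{bBb}$. The latter holds because $b\perp p$, so the hereditary subalgebra splits as a $2\times2$ block with corners $\overline{bBb}$ and $pBp$. The former requires care with units; if it fails I would instead argue directly with Elliott's description \cite{Ell07} of $\Cus$ via countably generated Hilbert modules. In that picture cancellation of the finitely generated projective summand $pB$ follows from stable rank one, by the module-theoretic Rieffel argument.
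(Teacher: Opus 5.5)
Your argument is correct. The paper does not prove this statement itself: it quotes it from \cite{RW10} and \cite{Ell07}, and remarks that the original conclusion is stronger. In R{\o}rdam--Winter the cancelled summand need not be a projection (weak cancellation: $x+z\ll y+z$ implies $x\ll y$), and the projection case follows because $[p]\ll[p]$. Elliott's approach works instead with countably generated Hilbert modules.

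Your route uses the projection essentially:
\begin{itemize}
\item the stable rank one unitary $u$ moves $(a-\varepsilon)_+ + p$ into $D=\overline{(b+p)B(b+p)}$;
\item Rieffel's cancellation in $\widetilde D$ gives a unitary of $\widetilde D$ that rotates $upu^*$ back onto $p$;
\item the image of $(a-\varepsilon)_+$ is then orthogonal to $p$, so it lies in $(1-p)D(1-p)=\overline{bBb}$.
\end{itemize}
This is short and needs only the unitary-conjugation lemma and cancellation of projections. It cannot handle summands that are not classes of projections, and that is what the stronger cited result supplies.

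Several steps should be stated more carefully.
\begin{itemize}
\item To invoke cancellation in $\widetilde D$ you need $p\sim q$ inside $D$. Unitary equivalence in $\widetilde B$ by itself only gives $[p]=[q]$ in $K_0(B)$. The equivalence in $D$ does hold: $up=q(up)p\in D\widetilde{B}D\subseteq D$ by heredity, so $up$ is a partial isometry in $D$ from $p$ to $q$. Rieffel's cancellation in $\widetilde D$ then gives $1-p\sim 1-q$, and hence your unitary $v$, without any appeal to $K_0$.
\item The claim that $p$ and $q$ are full corners of $D$ is false in general. It is never used, so drop it.
\item Membership of $u\bigl((a-\varepsilon)_+ + p\bigr)u^*$ in $D$ does not follow from Cuntz equivalence. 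It follows from heredity, since $(a-\varepsilon)_+ + p\leq(1-\varepsilon)^{-1}\bigl((a+p)-\varepsilon\bigr)_+$ when $a\perp p$ and $\varepsilon<1$.
\item Do not compress into $M_n(A)$; you cannot do this for $b$ without changing the hypothesis. Work directly in $A\otimes\K$, which has stable rank one, and arrange $a\perp p$ and $b\perp p$ using $A\otimes\K\cong M_3(A\otimes\K)$.
\end{itemize}
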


\subsection{Strict comparison of positive elements}
Write $\mathrm{QT}_c(A)$ for the cone of lower
semicontinuous, $[0,\infty]$-valued $2$-quasitraces
on $A$, identified with their extensions to
$A\otimes\mathcal K$.
These quasitraces are not required to be densely
defined; see \cite[Section~4.1]{ERS11}.
For $\tau\in\mathrm{QT}_c(A)$ and
$a\in(A\otimes\mathcal K)_+$, put
\[
    d_\tau(a)=\lim_{n\to\infty}\tau(a^{1/n}).
\]
If $A$ is unital, we identify $A$ with the corner
$A\otimes e_{11}$ and write
\[
    \mathrm{QT}_1(A)
    =
    \{\tau\in\mathrm{QT}_c(A):\tau(1_A)=1\}
\]
for its normalised $2$-quasitraces.

\begin{definition}\label{def:strict-comparison}
A \Cs-algebra $A$ has \emph{strict comparison of
positive elements} if, whenever
$a,b\in(A\otimes\mathcal K)_+$ are such that $a$ is contained in the ideal generated by $b$ in $A\otimes\K$ and that $d_\tau(a)<d_\tau(b)$ for every $\tau\in\mathrm{QT}_c(A)$
with $d_\tau(b)=1$, we have $a\precsim b$.

By \cite[Propositions~4.2 and~6.2]{ERS11}, this is
equivalent to almost unperforation of $\mathrm{Cu}(A)$,
that is,
\[
    (n+1)x\leq ny
    \quad\Longrightarrow\quad
    x\leq y
\]
for all $x,y\in\mathrm{Cu}(A)$ and $n\geq1$.
\end{definition}

We recall that Cuntz subequivalence is preserved by
$\ast$-homomorphisms, directly from its definition, and that if $A\subseteq D$ and $a,b\in A_+$ are such that $a\not\precsim b$ as elements of $D$, then $a\not\precsim b$ also as elements of $A$.
For every $\tau\in\mathrm{QT}_c(A)$, the function
$d_\tau$ is order preserving for Cuntz subequivalence
and additive on orthogonal positive elements, i.e., 
\[
    a\precsim b
    \quad\Longrightarrow\quad
    d_\tau(a)\leq d_\tau(b),
\]
and
\[
    ab=0
    \quad\Longrightarrow\quad
    d_\tau(a+b)=d_\tau(a)+d_\tau(b).
\]
These are consequences of
\cite[Proposition~4.2]{ERS11}.

For the latter identity, one may also use $(a+b)^{1/n}=a^{1/n}+b^{1/n}$ and additivity of quasitraces on commuting positive
elements.

If $\tau$ is a faithful tracial state and $a$ is a
nonzero positive contraction in $A_+$, then $0<\tau(a)\leq d_\tau(a)\leq1$.
Indeed, $a\leq a^{1/n}\leq1$ for every $n\geq1$.

Let $\varphi\colon C\to A$ be a unital
$\ast$-homomorphism and let $\tau\in\mathrm{QT}_1(A)$.
Then $\tau\circ\varphi\in\mathrm{QT}_1(C)$, and
functional calculus gives
\[
    d_\tau(\varphi(c))
    =
    d_{\tau\circ\varphi}(c)
    \qquad\text{for all }c\in C_+.
\]
In particular, if
$\varphi\colon\mathcal Z\to A$ is a unital embedding, \cite[Theorem~5.11]{Haa14} implies that $\tau\circ\varphi$ is a tracial state. 
By uniqueness of the trace on $\mathcal Z$, \cite[Theorem~2.9]{JS99}, we obtain
\[
    \tau\circ\varphi=\tau_{\mathcal Z},
    \qquad
    d_\tau(\varphi(c))=d_{\tau_{\mathcal Z}}(c)
\]
for all $c\in\calZ_+$.

A positive element $b$ of a unital \Cs-algebra $A$
is called \emph{full} if it generates $A$ as a
closed two-sided ideal.
For such an element, there exists $m\geq1$ such that $ [1_A]\leq m[b]$ in $\Cus(A)$;
see \cite[Equation~(4.1)]{Ror04}.
If $C$ is simple and
$\varphi\colon C\to A$ is a unital embedding, then
$\varphi(c)$ is full in $A$ for every nonzero
$c\in C_+$.
Indeed, the ideal generated by $\varphi(c)$
intersects $\varphi(C)$ in a nonzero ideal.
It therefore contains $\varphi(C)$, and in
particular $1_A$.

We will use the following observation, which uses
\cite[Propositions~4.2 and~6.2]{ERS11}, and the fullness criterion in \cite[Equation~(4.1)]{Ror04}.

\begin{lemma}\label{lem:strict-comparison-full-elements}
Let $A$ be a unital \Cs-algebra with strict comparison,
and let $a,b\in A_+$, with $b$ full.
If $d_\tau(a)<d_\tau(b)$ for every $\tau\in\mathrm{QT}_1(A)$, then $a\precsim b$.
\end{lemma}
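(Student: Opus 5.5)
The plan is to reduce to the almost-unperforation formulation of strict comparison recorded after Definition~\ref{def:strict-comparison}, or more directly to the definition itself, by passing from normalised quasitraces to the full cone $\mathrm{QT}_c(A)$. Since $b$ is full, the ideal of $A\otimes\K$ generated by $b$ (viewed in the corner $A\otimes e_{11}$) is all of $A\otimes\K$. In particular $a$ lies in it, so the ideal hypothesis in Definition~\ref{def:strict-comparison} is automatic. It therefore remains to check that $d_\tau(a)<d_\tau(b)$ for every $\tau\in\mathrm{QT}_c(A)$ with $d_\tau(b)=1$.

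Fix such a $\tau$. By \cite[Equation~(4.1)]{Ror04} there is $m\geq1$ with $[1_A]\leq m[b]$ in $\Cus(A)$. Since $d_\tau$ is order preserving and additive, this gives
\[
d_\tau(1_A)\leq m\,d_\tau(b)=m<\infty .
\]
Because $d_\tau(1_A)=\tau(1_A)$, the value $\tau(1_A)$ is finite. It is also nonzero: $d_\tau(b)\leq \|b\|$-scaled quantities are dominated via $b\precsim 1_A$, so $1=d_\tau(b)\leq d_\tau(1_A)$. Hence $\lambda:=\tau(1_A)\in(0,\infty)$.

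Set $\tau':=\lambda^{-1}\tau$. This is again a lower semicontinuous $2$-quasitrace, and $\tau'(1_A)=1$, so $\tau'\in\mathrm{QT}_1(A)$. Since $d_{\tau'}=\lambda^{-1}d_\tau$, the hypothesis $d_{\tau'}(a)<d_{\tau'}(b)$ yields $d_\tau(a)<d_\tau(b)$. Strict comparison then gives $a\precsim b$.

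The only delicate point is the dichotomy $\tau(1_A)\in\{0,\infty\}$ that an arbitrary element of $\mathrm{QT}_c(A)$ could a priori exhibit. Both cases are excluded by fullness via the inequality $[1_A]\leq m[b]$ together with $b\precsim 1_A$. A secondary point is identifying $A_+$ with $(A\otimes e_{11})_+$ and observing that fullness in $A$ implies fullness in $A\otimes\K$; this holds because the ideal generated by $1_A\otimes e_{11}$ is all of $A\otimes\K$.
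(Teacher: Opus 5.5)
Your proposal is correct and follows the paper's proof essentially step for step. Fullness gives both the ideal condition and $[1_A]\le m[b]$, hence $1=d_\tau(b)\le\tau(1_A)\le m$ for every $\tau\in\mathrm{QT}_c(A)$ with $d_\tau(b)=1$, and normalising to $\tau/\tau(1_A)\in\mathrm{QT}_1(A)$ turns the hypothesis into the unnormalised inequality that strict comparison requires. The one blemish is the garbled clause about ``$\|b\|$-scaled quantities'': the lower bound $1\le\tau(1_A)$ follows simply from $b=b^{1/2}1_Ab^{1/2}\precsim 1_A$.
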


\begin{proof}
Since $b$ is full, $a$ belongs to the ideal generated
by $b$, and there exists $m\geq1$ such that $[1_A]\leq m[b]$.
Fix $\rho\in\mathrm{QT}_c(A)$ with $d_\rho(b)=1$.
Since $b\precsim 1_A$, we have
\[
    1=d_\rho(b)\leq\rho(1_A)
    =d_\rho(1_A)\leq m\,d_\rho(b)=m.
\]
Thus
\[
    \tau=\frac{\rho}{\rho(1_A)}
    \in\mathrm{QT}_1(A).
\]
By assumption,
\[
    d_\rho(a)
    =
    \rho(1_A)d_\tau(a)
    <
    \rho(1_A)d_\tau(b)
    =
    d_\rho(b).
\]
Strict comparison now gives $a\precsim b$.
\end{proof}


\section{The obstruction}\label{sec:obstruction}

For the rest of the present section, for $N>0$, write
\[
    \mathrm \calU(N)=\mathcal U(M_N),
    \qquad
    \cal{PU}(N)
       =\mathrm \calU(N)/(\mathbb T1_N),
\]
where the quotient carries the quotient topology, and denote
the class of $v\in\mathrm \calU(N)$ by $[v]$.
For $k\geq1$, write
\[
    S^{2k-1}=\{\xi\in\mathbb C^k:\|\xi\|=1\}.
\]

The following lemma can obtained as a direct consequence of \cite[Theorem 3.10(i)]{PP13}, which deals with the existence of certain $\mathbb T$-equivariant maps between Stiefel manifolds.

\begin{lemma}
\label{lem:equivariant-zero}
Let $N$ be a power of a prime and let $1\leq r<N$.
Every continuous map
$f\colon\mathcal U(N)\to\mathbb C^r$ satisfying
\[
    f(\lambda v)=\lambda f(v)
    \quad\text{for all }\lambda\in\mathbb T,\
    v\in\mathcal U(N)
\]
vanishes at some point.
\end{lemma}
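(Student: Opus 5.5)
We argue by contradiction and reduce the statement to a Borsuk--Ulam type result for the cyclic group $\mathbb Z/p$, where $N=p^s$. Suppose $f\colon\mathcal U(N)\to\mathbb C^r$ is continuous, $\mathbb T$-equivariant, and nowhere vanishing. Normalising, $g=f/\|f\|$ is a $\mathbb T$-equivariant map $\mathcal U(N)\to S^{2r-1}$. Since $r\le N-1$, we may compose with the inclusion $S^{2r-1}\subseteq S^{2N-3}$ and assume $r=N-1$. The goal is then to show that no $\mathbb T$-equivariant map $\mathcal U(N)\to S^{2N-3}$ exists.

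Our first step is to restrict to a finite subgroup. Take $H=\mathbb Z/p\subseteq\mathbb T$, acting freely on both sides: on $\mathcal U(N)$ by scalar multiplication and on $S^{2N-3}$ by scalars. Restricting $g$ gives a $\mathbb Z/p$-equivariant map $\mathcal U(N)\to S^{2N-3}$. We will obtain the contradiction from an index (Fadell--Husseini or Stiefel--Whitney/Chern type) computation in equivariant cohomology with $\mathbb F_p$ coefficients. The index of the sphere $S^{2N-3}$ with the free scalar action is the ideal of $H^*(B\mathbb Z/p;\mathbb F_p)$ generated by $t^{N-1}$, where $t$ is the degree-two generator, which is the mod $p$ Euler class of the standard representation. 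So it suffices to show that $t^{N-1}$ is nonzero in $H^*_{\mathbb Z/p}(\mathcal U(N);\mathbb F_p)=H^*(\mathcal U(N)/(\mathbb Z/p);\mathbb F_p)$. An equivariant map would then pull back the vanishing class $t^{N-1}$ to a nonzero class, which is the contradiction.

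The main obstacle is this nonvanishing. We will compute via the fibration $\mathcal U(N)\to\mathcal U(N)/\mu_p\to B\mu_p$, or more conveniently through $\mathcal{U}(N)/\mathbb T=\mathcal{PU}(N)$. Consider the circle bundle $\mathcal U(N)\to\mathcal{PU}(N)$, whose first Chern class $x\in H^2(\mathcal{PU}(N);\mathbb Z)\cong\mathbb Z/N$ is a generator. A $\mathbb T$-equivariant map to $S^{2N-3}$ is the same as a nowhere-vanishing section of $L^{\oplus (N-1)}$, where $L$ is the associated line bundle over $\mathcal{PU}(N)$. Its existence forces $c_{N-1}(L^{\oplus(N-1)})=x^{N-1}=0$. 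Hence it suffices to know that $x^{N-1}\neq 0$ in $H^{2N-2}(\mathcal{PU}(N);\mathbb Z)$, or in mod $p$ cohomology. This is a known computation: for $N=p^s$, the mod $p$ cohomology of $\mathcal{PU}(p^s)$ contains $\mathbb F_p[x]/(x^{p^s})$ (Baum--Browder). In particular $x^{N-1}\ne 0$ mod $p$. This is exactly where the prime power hypothesis enters; for other $N$ the truncation happens earlier.

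To keep things self-contained, we would instead invoke the equivariant map result cited in the paper, \cite[Theorem 3.10(i)]{PP13}. The identification $\mathcal U(N)=V_N(\mathbb C^N)$ is a Stiefel manifold with its free $\mathbb T$-action, and the theorem asserts that there is no $\mathbb T$-map $V_N(\mathbb C^N)\to S^{2r-1}$ for $r<N$ when $N$ is a prime power. The bundle/Chern-class argument above explains why. So the concrete steps are: normalise $f$ to get an equivariant map to a sphere, identify $\mathcal U(N)$ with the Stiefel manifold, and apply the cited theorem (or the Baum--Browder truncated polynomial computation) to reach a contradiction.
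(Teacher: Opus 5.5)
Your proposal is correct, and its concrete steps are the paper's proof: normalise $f$ to a $\mathbb T$-equivariant map $\mathcal U(N)\cong V_N(\mathbb C^N)\to V_1(\mathbb C^r)=S^{2r-1}$ and contradict \cite[Theorem~3.10(i)]{PP13}. Your Chern-class argument is also sound and gives an independent route that replaces the citation by the Baum--Browder computation: a nowhere-vanishing section of $L^{\oplus(N-1)}$ over $\mathcal{PU}(N)$ would force $x^{N-1}=0$, contradicting $\mathbb F_p[x]/(x^{p^s})\subseteq H^*(\mathcal{PU}(p^s);\mathbb F_p)$. The preceding $\mathbb Z/p$-index reduction is never finished (you do not show $t^{N-1}\neq0$ in $H^*(\mathcal U(N)/(\mathbb Z/p);\mathbb F_p)$), but it is not needed and can be dropped.
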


\begin{proof}
For $1\leq k\leq n$, let $V_k(\mathbb C^n)$ denote the
space of ordered orthonormal $k$-tuples in $\mathbb C^n$,
equipped with the subspace topology and the circle action
\[
    \lambda(\xi_1,\ldots,\xi_k)
       =(\lambda\xi_1,\ldots,\lambda\xi_k).
\]
Identifying a unitary matrix with its ordered columns gives
a $\mathbb T$-equivariant homeomorphism
$\mathcal U(M_N)\cong V_N(\mathbb C^N)$.
Likewise, $V_1(\mathbb C^r)=S^{2r-1}$, where
$S^{2r-1}$ is the unit sphere of $\mathbb C^r$.
Suppose that $f$ never vanishes.
Then
\[
  V_N(\mathbb C^N)\longrightarrow V_1(\mathbb C^r),\quad  v\longmapsto\frac{f(v)}{\|f(v)\|}
\]
defines a continuous $\mathbb T$-equivariant map.
Since $N$ is a prime power and $N>r$, this contradicts
\cite[Theorem~3.10(i)]{PP13}.
\end{proof}


\begin{notation}\label{not:two-embeddings}
    For each $N>0$, define
unital embeddings by
\[
  \sigma_N\colon M_N\to C(\cal{PU}(N),M_N),\quad   \sigma_N(a)([v])=a,  
\]
and 
\[
\lambda_N\colon M_N\to C(\cal{PU}(N),M_N),\quad \lambda_N(a)([v])=v^*av.
\]
Note that the second map does not depend on the representative of
$[v]$.
\end{notation}

\begin{lemma}\label{lem:obstruction-lemma}
Let $N$ be a power of a prime.
For any nonzero elements $a,b\in (M_N)_+$, with $b$ non-invertible, and $x\in  C(\cal{PU}(N),M_N)$, one has
\[
    \|\sigma_N(a)-x\lambda_N(b)x^*\| \
       \geq\|a\|. 
\]
\end{lemma}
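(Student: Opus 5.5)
The plan is to produce a single point $[v]\in\cal{PU}(N)$ and a unit vector $\xi\in\mathbb C^N$ at which the two matrices $\sigma_N(a)([v])=a$ and $y_v:=x([v])\,v^*bv\,x([v])^*$ are forced far apart. I will detect the distance with the quadratic form $\langle(a-y_v)\xi,\xi\rangle$.

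Fix a unit eigenvector $\xi$ of $a$ with $a\xi=\|a\|\xi$; it exists because $a\geq0$. Since $y_v\geq0$, we have
\[
\|\sigma_N(a)-x\lambda_N(b)x^*\|\geq \|a-y_v\|\geq\langle (a-y_v)\xi,\xi\rangle=\|a\|-\langle y_v\xi,\xi\rangle
\]
for every $v\in\calU(N)$. So it suffices to find $v$ with $y_v\xi=0$. Writing $y_v=(b^{1/2}v x([v])^*)^*(b^{1/2}vx([v])^*)$, the condition $y_v\xi=0$ is equivalent to $b^{1/2}vx([v])^*\xi=0$. This in turn says that $vx([v])^*\xi\in\ker b$, i.e. that $vx([v])^*\xi$ is orthogonal to the range of $b$.

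Let $r$ be the rank of $b$. Since $b\neq0$ we have $r\geq1$, and since $b$ is not invertible we have $r<N$. Choose an orthonormal basis $\eta_1,\dots,\eta_r$ of the range of $b$, and define
\[
f\colon\calU(N)\to\mathbb C^r,\qquad f(v)=\bigl(\langle v\,x([v])^*\xi,\eta_i\rangle\bigr)_{i=1}^r .
\]
This map is continuous, because $v\mapsto[v]$ is continuous and $x$ is continuous on $\cal{PU}(N)$. It is $\mathbb T$-equivariant: $[\lambda v]=[v]$, so $f(\lambda v)=\lambda f(v)$. Since $N$ is a prime power and $1\leq r<N$, Lemma~\ref{lem:equivariant-zero} gives some $v$ with $f(v)=0$. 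At that $v$, the vector $vx([v])^*\xi$ is orthogonal to the range of $b$, hence lies in $\ker b$, so $y_v\xi=0$. Plugging this into the displayed estimate gives the bound $\|a\|$.

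The only genuinely nontrivial input is the topological zero-forcing step, which is exactly Lemma~\ref{lem:equivariant-zero}. What remains is to set up the reduction correctly. The key point is that the equivariance comes for free, because $x$ is defined on the projective unitary group while $v$ itself enters linearly. One also has to use the eigenvector of $a$ for its top eigenvalue, together with the positivity of $y_v$, to convert the vanishing of $y_v\xi$ into the norm estimate.
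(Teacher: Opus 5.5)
Your proof is correct and follows the paper's argument: you use the same top eigenvector $\xi$ and the same appeal to Lemma~\ref{lem:equivariant-zero}. Your $\mathbb T$-equivariant map is the paper's up to an isomorphism: you take coordinates of $v\,x([v])^*\xi$ in an orthonormal basis of $\mathrm{ran}(b)$, while the paper uses $b^{1/2}v\,x([v])^*\xi$, and the two vanish at the same points. The only cosmetic difference is that you read off the bound from the quadratic form $\langle(a-y_v)\xi,\xi\rangle$ (where positivity of $y_v$ is not actually needed) instead of from $\|(a-y_v)\xi\|$.
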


\begin{proof}
Let $a,b$ and $x$ be as in the statement.
Choose a unit vector $\xi\in\mathbb C^N$ with
$a\xi=\|a\|\xi$, which exists by the spectral theorem.
Let $r<N$ be the rank of $b$, and identify $\mathrm{ran}(b)$ with $\mathbb C^r$.  
Define the map
\[
   f\colon \calU(N) \to \mathbb C^r,\quad  f(v)=b^{1/2}v\,x([v])^*\xi
\]
and note that it is continuous, and satisfies $f(\lambda v)=\lambda f(v)$ for all $\lambda\in\mathbb T$ and $v\in \calU(N)$. Hence, by Lemma~\ref{lem:equivariant-zero}, there exists $v_0\in\mathrm \calU(N)$ with 
\[
f(v_0)=b^{1/2}v_0x([v_0])^*\xi=0.
\] 
Now, we use this identity to deduce the claimed norm estimate, 
\begin{align*}
   \|\sigma_N(a)-x\lambda_N(b)x^*\|
&=\sup_{[v]\in \cal{PU}(N)}
  \|a-x([v])v^*bvx([v])^*\|\\
&\geq  \|a-x([v_0])v_0^*bv_0x([v_0])^*\|\\
&\geq \|a\xi-x([v_0])v_0^*b^{1/2}(b^{1/2}v_0x([v_0])^*\xi)\|\\
&= \|a\|.
\end{align*}
\end{proof}

We also record here a specific case of the previous lemma.

\begin{lemma}\label{lem:matrix-orbit-gap}
Let $N$ be a power of a prime.
For every nonzero, self-adjoint element $a\in M_N$ and $w\in\mathcal U(C(\cal{PU}(N),M_N))$, we have
\[
    \|w\sigma_N(a)w^*-\lambda_N(a)\|
       =\max\operatorname{sp}(a)-\min\operatorname{sp}(a).
\]
\end{lemma}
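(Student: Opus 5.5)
Set $\alpha=\max\operatorname{sp}(a)$, $\beta=\min\operatorname{sp}(a)$ and $d=\alpha-\beta$. We prove the two inequalities separately.

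\emph{Upper bound.} Write $c=\frac{\alpha+\beta}{2}$. Both $w\sigma_N(a)w^*-c$ and $\lambda_N(a)-c$ are pointwise unitary conjugates of $a-c$, whose norm is $d/2$. Since conjugating by $w$ and by $v$ preserves the scalar $c$, the triangle inequality gives
\[
\|w\sigma_N(a)w^*-\lambda_N(a)\|\leq \|w\sigma_N(a)w^*-c\|+\|\lambda_N(a)-c\|=d.
\]

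\emph{Lower bound.} If $a$ is scalar then $d=0$ and there is nothing to prove, so assume $a$ is not scalar. The plan is to apply Lemma~\ref{lem:obstruction-lemma} after shifting $a$ to make it positive and non-invertible. Put $b=a-\beta\geq0$. It is nonzero since $a$ is not scalar, it is non-invertible since $\beta\in\operatorname{sp}(a)$, and $\|b\|=d$. Scalars commute with everything, so
\[
w\sigma_N(a)w^*-\lambda_N(a)=w\sigma_N(b)w^*-\lambda_N(b)=w\bigl(\sigma_N(b)-w^*\lambda_N(b)w\bigr)w^*.
\]
Since $w$ is unitary, the norm of this element equals $\|\sigma_N(b)-x\lambda_N(b)x^*\|$ with $x=w^*\in C(\cal{PU}(N),M_N)$. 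Lemma~\ref{lem:obstruction-lemma}, applied with both positive elements equal to $b$, shows this is at least $\|b\|=d$.

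Combining the two bounds gives equality. None of the steps is a serious obstacle: the topological content is already contained in Lemma~\ref{lem:obstruction-lemma}, and the only point needing care is choosing the spectral shift so that $b$ is simultaneously positive, nonzero and non-invertible with $\|b\|=d$.
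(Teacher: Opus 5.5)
Your proof is correct and follows the paper's argument: you shift by $\min\operatorname{sp}(a)$ to reduce to a nonzero, positive, non-invertible element and then apply Lemma~\ref{lem:obstruction-lemma} with $x=w^*$, a conjugation step the paper leaves implicit. The only difference is the elementary upper bound: you center at the midpoint of the spectrum and use the triangle inequality, whereas the paper observes that both positive elements lie between $0$ and $\|a\|1_N$.
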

\begin{proof}
The assertion is immediate when $a$ is a multiple of the identity matrix.
Since both embeddings are unital, subtracting
$\min\operatorname{sp}(a)1_N$ reduces us to the case where
$a$ is positive, nonzero, and non-invertible. By Lemma \ref{lem:obstruction-lemma}, 
\[
\|w\sigma_N(a)w^*-\lambda_N(a)\|\geq\|a\|.
\]
The reverse inequality follows because both positive
elements lie between $0$ and $\|a\|1_N$. This proves the statement.
\end{proof}

We are ready to prove the following obstruction theorem to approximate unitary equivalence of morphisms for certain MF algebras. Recall that a unital matricial field (MF) algebra is a unital \Cs-algebra $A$ that unitally embeds into an ultraproduct of matrix algebras.

\begin{theorem}\label{thm:non-unique-embeddings}
    Let $A\not\cong\mathbb C$ be a unital \Cs-algebra that embeds unitally into $\prod_{\omega}M_{N_n}$, where $({N_n})_n$ is a sequence of powers of prime numbers.
    Let $\iota_0,\iota_1$ denote the canonical unital embeddings into the universal unital free product $A\ast_{\mathbb C} A$.
    Then,
    \[
    \|w\iota_0(h)w^*-\iota_1(h)\|=1
    \]
    for all   $w\in\calU(A\ast_{\mathbb C}A)$ and all positive, non-invertible, norm one elements $h\in A$.

    In particular, $\iota_0,\iota_1$ are not approximately unitarily equivalent.
\end{theorem}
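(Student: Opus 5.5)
The plan is to map $A\ast_{\mathbb C}A$ into an ultraproduct of the algebras $C(\mathcal{PU}(N_n),M_{N_n})$, so that $\iota_0$ and $\iota_1$ become (ultraproducts of) $\sigma_{N_n}$ and $\lambda_{N_n}$, and then apply Lemma~\ref{lem:obstruction-lemma} coordinatewise.

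First, fix the unital embedding $\pi\colon A\to\prod_\omega M_{N_n}$ and write $D=\prod_\omega C(\mathcal{PU}(N_n),M_{N_n})$. Define
\[
\sigma=\Bigl(\prod_\omega\sigma_{N_n}\Bigr)\circ\pi,\qquad \lambda=\Bigl(\prod_\omega\lambda_{N_n}\Bigr)\circ\pi,
\]
both unital maps $A\to D$. These are well defined because $\sigma_{N}$ and $\lambda_N$ are isometric unital $\ast$-homomorphisms, so they pass to ultraproducts. By the universal property of the unital free product, there is a unital $\ast$-homomorphism $\Phi\colon A\ast_{\mathbb C}A\to D$ with $\Phi\circ\iota_0=\sigma$ and $\Phi\circ\iota_1=\lambda$.

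Now let $w$ be a unitary and $h$ a positive, non-invertible, norm-one element. Since $\Phi$ is contractive,
\[
\|w\iota_0(h)w^*-\iota_1(h)\|\ \ge\ \|x\sigma(h)x^*-\lambda(h)\|,\qquad x=\Phi(w).
\]
The upper bound $\le 1$ is automatic, because both $w\iota_0(h)w^*$ and $\iota_1(h)$ lie between $0$ and $1$.

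For the lower bound, the aim is a coordinatewise estimate with roles swapped relative to Lemma~\ref{lem:obstruction-lemma}. Write
\[
\|x\sigma(h)x^*-\lambda(h)\|=\|\sigma(h)-x^*\lambda(h)x\|,
\]
and lift $x^*$ to a sequence of unitaries $y_n\in C(\mathcal{PU}(N_n),M_{N_n})$ and $\pi(h)$ to $(a_n)_n$. The main obstacle is that the lifts $a_n$ need not be non-invertible or of norm one, so the lemma's hypotheses must be arranged. I would proceed in three steps.

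\begin{enumerate}
\item Choose the lifts $a_n$ positive with $\|a_n\|\to1$ along $\omega$, which is possible since positive elements lift to positive elements.
\item Use non-invertibility of $h$ in $A$. Since $\pi$ is an injective unital map between C$^\ast$-algebras, spectra are preserved, so $0\in\operatorname{sp}(\pi(h))$. For $\epsilon>0$, set $g_\epsilon(t)=\max(t-\epsilon,0)$. Then $g_\epsilon(\pi(h))$ is within $\epsilon$ of $\pi(h)$. Moreover $\chi_{[0,\epsilon/2]}(\pi(h))$ is nonzero, because $\pi(h)$ has spectrum near $0$. I expect this to force, for $\omega$-most $n$, that $a_n$ has an eigenvalue below $\epsilon$, so that $b_n=g_\epsilon(a_n)$ is non-invertible. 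The point is that if $\min\operatorname{sp}(a_n)\ge\epsilon$ held $\omega$-often, then $\pi(h)\ge\epsilon$, contradicting $0\in\operatorname{sp}(\pi(h))$.
\item Apply Lemma~\ref{lem:obstruction-lemma} with $a=g_\epsilon(a_n)$ (nonzero for large $n$), $b=g_\epsilon(a_n)$ (non-invertible), and $x=y_n$. This gives a norm at least $\|a_n\|-\epsilon$ in each such coordinate.
\end{enumerate}

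Passing to the limit along $\omega$ and accounting for the $\epsilon$-perturbation yields a bound of at least $1-2\epsilon$, so letting $\epsilon\to0$ gives equality $1$.

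Finally, approximate unitary equivalence would force this norm to tend to $0$ for $h\ne 0$. It remains to produce a positive, non-invertible, norm-one $h$, which is where $A\not\cong\mathbb C$ is used. Take a self-adjoint element with at least two points in its spectrum, shift it so its minimum spectral value is $0$, and normalise.
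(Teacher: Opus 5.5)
Your proposal is correct and follows essentially the same route as the paper: push $A\ast_{\mathbb C}A$ into $\prod_\omega C(\mathcal{PU}(N_n),M_{N_n})$ via $\sigma$, $\lambda$ and the universal property, then apply Lemma~\ref{lem:obstruction-lemma} coordinatewise to lifts. The only difference is how you make the lifted matrices non-invertible: you cut down with $g_\epsilon$ and perturb (with $a=b=g_\epsilon(a_n)$ your bookkeeping actually gives $1-3\epsilon$ rather than $1-2\epsilon$, which is harmless, and the $\chi_{[0,\epsilon/2]}$ remark is unnecessary because your argument that $\min\operatorname{sp}(a_n)\geq\epsilon$ on an $\omega$-large set would make $\pi(h)$ invertible already suffices), whereas the paper subtracts $\min\operatorname{sp}(h_n)1_{N_n}$ as in Lemma~\ref{lem:matrix-orbit-gap}, obtaining the exact value $\max\operatorname{sp}(h_n)-\min\operatorname{sp}(h_n)$ in each coordinate and then using $\min\operatorname{sp}(h_n)\to0$ along $\omega$.
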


\begin{proof}
    We identify $A$ with its image in $\prod_{\omega}M_{N_n}$.
    Let $\sigma,\lambda\colon A \to \prod_{\omega}C(\cal{PU}(N_n),M_{N_n})$ be the unital $\ast$-homomorphisms induced by the sequences of maps $(\sigma_{N_n})_n$ and $(\lambda_{N_n})_n$ defined in Notation \ref{not:two-embeddings}.
    Note that, for any positive norm one, non-invertible $h\in A$,
\[
    \max\operatorname{sp}(h)-\min\operatorname{sp}(h)
       =\|h\|+\|1_A-h\|-1=1,
\]
where the first identity holds for any positive contraction in any unital \Cs-algebra.
Consequently, lifting $h$ to a sequence of positive, norm one elements $(h_n)_n$ in $\prod_{n=1}^\infty M_{N_n}$, we have that 
\[
    \lim_{n\to\omega}
    \left(
       \max\operatorname{sp}(h_n)
       -\min\operatorname{sp}(h_n)
    \right)=1.
\]
Consider now any unitary $u\in \prod_{\omega}C(\cal{PU}(N_n),M_{N_n})$, and choose a representative sequence
$(u_n)_n$ of $u$ with $u_n\in\mathcal U(C(\cal{PU}(N_n),M_{N_n}))$.
It follows from Lemma~\ref{lem:matrix-orbit-gap} that 
\begin{align*}
    \|u\sigma(h)u^*-\lambda(h)\|
      &=
      \lim_{n\to\omega}
      \|u_n\sigma_{N_n}(h_n)u_n^*
                 -\lambda_{N_n}(h_n)\| \\
    &=\lim_{n\to\omega}\left(
       \max\operatorname{sp}(h_n)
       -\min\operatorname{sp}(h_n)
    \right)\\
    &=1.
\end{align*}
Use the universal property of $A\ast_{\mathbb C}A$ to obtain a unital $\ast$-homomorphism
\[
q\colon A\ast_{\mathbb C}A\to \prod_{\omega}C(\cal{PU}(N_n),M_{N_n})
\]
with $q\circ \iota_0=\sigma$ and
$q\circ \iota_1=\lambda$.
For every $w\in\mathcal U(A\ast_{\mathbb C}A)$,
\[
    \|w\iota_0(h)w^*-\iota_1(h)\|
       \geq
    \|q(w)\sigma(h)q(w)^*-\lambda(h)\|=1.
\]
\end{proof}

Observe that UHF algebras of type $M_{p^\infty}$ for some prime $p$, and thus any unital \Cs-algebra that embeds unitally into one such \Cs-algebra, satisfy the assumptions of Theorem \ref{thm:non-unique-embeddings}.
Now,  $\calZ$ embeds unitally into all such UHF \Cs-algebras because they are $\calZ$-stable by \cite[Corollary 6.3]{JS99}. Therefore, Theorem \ref{introthm:nonunique-Z-embeddings} follows immediately from Theorem \ref{thm:non-unique-embeddings}.

We now turn to the obstruction to strict comparison, which illustrates the true strength of the obstruction provided by Lemma \ref{lem:obstruction-lemma}.

\begin{theorem}\label{thm:obstruction-Cuntz}
Let $(N_n)_n$ be a sequence of powers of primes such that $\calZ$ embeds unitally into $\prod_{\omega}M_{N_n}$ for some free ultrafilter $\omega$. 
Let $\sigma,\lambda\colon \calZ \to \prod_{\omega}C(\cal{PU}(N_n),M_{N_n})$ be the unital $\ast$-homomorphisms induced by the sequences of maps $(\sigma_{N_n})_n$ and $(\lambda_{N_n})_n$ defined in Notation \ref{not:two-embeddings}.
Then every \Cs-subalgebra $B$ in the inclusion
\[
    C^*(\sigma(\mathcal Z),\lambda(\mathcal Z))
    \subseteq B\subseteq \prod_{\omega}C(\cal{PU}(N_n),M_{N_n})
\]
fails strict comparison of positive elements.
\end{theorem}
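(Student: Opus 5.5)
The plan is to produce positive elements $a=\sigma(g)$ and $b=\lambda(k)$ in $C^*(\sigma(\calZ),\lambda(\calZ))\subseteq B$, with $g,k\in\calZ_+$. They should satisfy $d_\tau(a)<d_\tau(b)$ for every $\tau\in\mathrm{QT}_1(B)$, while $a\not\precsim b$. Lemma~\ref{lem:strict-comparison-full-elements} then shows that $B$ fails strict comparison.

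\textbf{Choice of elements.} Fix a positive contraction $h\in\calZ$ with $\operatorname{sp}(h)=[0,1]$ whose spectral measure with respect to $\tau_\calZ$ is Lebesgue measure; such an element exists in $\calZ$. Fix $0<\delta<1/2$. Let $k=f(h)$ and $g=f'(h)$, where
\begin{itemize}
\item $f$ is continuous, vanishes on $[0,\delta]$, and is strictly positive on $(\delta,1]$;
\item $f'$ is continuous, has norm one, vanishes on $[0,1-\delta]$, and is strictly positive on $(1-\delta,1]$.
\end{itemize}
Then $d_{\tau_\calZ}(g)=\delta<1-\delta=d_{\tau_\calZ}(k)$.

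\textbf{Trace hypothesis.} For any $\tau\in\mathrm{QT}_1(B)$, the compositions $\tau\circ\sigma$ and $\tau\circ\lambda$ lie in $\mathrm{QT}_1(\calZ)$. As recalled in the preliminaries, Haagerup's theorem and uniqueness of the trace on $\calZ$ force both to equal $\tau_\calZ$. Hence $d_\tau(a)=\delta<1-\delta=d_\tau(b)$ for every such $\tau$; if $\mathrm{QT}_1(B)$ is empty the hypothesis holds vacuously. Moreover $b$ is full in $B$: $\calZ$ is simple and $k\neq0$, so the ideal of $B$ generated by $\lambda(k)$ contains $\lambda(\calZ)$, and in particular $1$. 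So the hypotheses of Lemma~\ref{lem:strict-comparison-full-elements} hold. If $B$ had strict comparison, we would get $a\precsim b$ in $B$, hence also in the ambient ultraproduct.

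\textbf{Non-subequivalence (main step).} It remains to show $\|\sigma(g)-x\lambda(k)x^*\|\ge1$ for every $x$ in $\prod_\omega C(\cal{PU}(N_n),M_{N_n})$. This excludes $a\precsim b$, since Cuntz subequivalence requires $\|a-x_m^*bx_m\|\to0$ for some sequence $(x_m)$, and we may take $x=x_m^*$.

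\begin{itemize}
\item Lift $h$ to self-adjoint contractions $h_n\in M_{N_n}$. Since $h$ is normal with $0\in\operatorname{sp}(h)$, its spectrum in the ultraproduct is the set of $\omega$-limit points of the spectra of the $h_n$. So, after a small perturbation of each $h_n$ (e.g.\ replacing $h_n$ by $\phi(h_n)$ for a continuous $\phi$ that flattens a neighbourhood of $0$ to $0$ and is close to the identity), we may assume that $\omega$-almost every $h_n$ has an eigenvalue in $[0,\delta)$.
\item Then $k_n=f(h_n)$ is positive, nonzero and non-invertible, and $g_n=f'(h_n)$ is positive with $\|g_n\|\to1$ along $\omega$.
\item Choose representatives $x_n$ of $x$. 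Since $\sigma=(\sigma_{N_n})_n$ and $\lambda=(\lambda_{N_n})_n$ coordinatewise, Lemma~\ref{lem:obstruction-lemma} in each coordinate gives
\[
\|\sigma(g)-x\lambda(k)x^*\|=\lim_{n\to\omega}\|\sigma_{N_n}(g_n)-x_n\lambda_{N_n}(k_n)x_n^*\|\ge\lim_{n\to\omega}\|g_n\|=1 .
\]
\end{itemize}

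The main obstacle is this lifting step. We must guarantee non-invertibility of the lifts $k_n$ coordinatewise, which is what Lemma~\ref{lem:obstruction-lemma} needs, and this is handled via the spectral description in the ultraproduct. The rest is bookkeeping with the trace computation and Lemma~\ref{lem:strict-comparison-full-elements}.
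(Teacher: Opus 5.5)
Your proposal is correct, and its skeleton is the paper's: trace uniqueness (via Haagerup) gives $d_\tau(\sigma(g))<d_\tau(\lambda(k))$ for all $\tau\in\mathrm{QT}_1(B)$, and simplicity of $\calZ$ makes $\lambda(k)$ full. Lemma~\ref{lem:strict-comparison-full-elements} would then force $\sigma(g)\precsim\lambda(k)$, which a coordinatewise application of Lemma~\ref{lem:obstruction-lemma} rules out.

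The genuine difference is in the step you flag as the main obstacle, namely non-invertibility of the coordinate lifts. The paper uses \emph{orthogonal} test elements $a=f_1(h)$, $b=(f_2+f_3)(h)$ with lifts $c_n=f_1(h_n)$, $d_n=(f_2+f_3)(h_n)$. Then $c_nd_n=0$ forces $d_n$ to be singular whenever $c_n\neq0$, a purely algebraic argument that needs no control of $\operatorname{sp}(h_n)$. You use nested supports instead, and obtain singularity of $k_n=f(h_n)$ from $0\in\operatorname{sp}(h)$ via the description of spectra of self-adjoint elements in an ultraproduct. This is valid and avoids the relabelling step, at the price of that standard spectral fact. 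It also gives a small bonus: $g$ lies in the hereditary subalgebra of $\calZ$ generated by $k$, so you get $\sigma(k)\not\precsim\lambda(k)$ as a by-product.

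Three small cleanups.
\begin{enumerate}
\item No perturbation is needed. With positive lifts (e.g.\ $(h_n)_+$, which still lifts $h$), the spectral description directly gives an eigenvalue in $[0,\delta)$ for $\omega$-almost every $n$. A $\phi$ that is merely ``close to the identity'' would change the lifted elements unless $f\circ\phi=f$ and $f'\circ\phi=f'$.
\item The Lebesgue spectral measure is superfluous. Faithfulness of $\tau_\calZ$ and $\operatorname{sp}(h)=[0,1]$ already give $d_{\tau_\calZ}(k)-d_{\tau_\calZ}(g)=\mu_h((\delta,1-\delta])>0$, where $\mu_h$ is the spectral measure of $h$ with respect to $\tau_\calZ$.
\item Since $\precsim$ is defined in the stabilisation, you should record, as the paper does, that compressing by $1\otimes e_{11}$ reduces any witnessing sequence to one inside the unital ultraproduct itself. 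Only then does taking $x=x_m^*$ in Lemma~\ref{lem:obstruction-lemma} make sense.
\end{enumerate}
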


\begin{proof}
Choose a positive contraction $h\in \calZ$ with
$\operatorname{sp}(h)=[0,1]$, and pairwise orthogonal positive contractions
$f_1,f_2,f_3\in C([0,1])$ of norm one.
Then, use continuous functional calculus to define 
\[
    a_j=f_j(h)\in \calZ,
    \qquad j=1,2,3.
\]
These are pairwise orthogonal positive contractions
of norm one.
Now, since $\tau_{\mathcal Z}$ is faithful,
\[
    d_{\tau_{\mathcal Z}}(a_j)>0
    \qquad\text{for }j=1,2,3.
\]
After relabelling, we may assume that
\[
    d_{\tau_{\mathcal Z}}(a_1)
    \leq d_{\tau_{\mathcal Z}}(a_j)
    \qquad\text{for }j=2,3.
\]
Set $ a=a_1$, and $ b=a_2+a_3$.
Then $a,b\in \calZ$ are positive contractions of norm one,
$ab=0$, and orthogonality gives
\begin{equation*}
\begin{aligned}
    d_{\tau_{\mathcal Z}}(a)
    <
    d_{\tau_{\mathcal Z}}(a_2)
    +d_{\tau_{\mathcal Z}}(a_3)
    =d_{\tau_{\mathcal Z}}(b).
\end{aligned}
\end{equation*}

Let $\tau$ be any normalized $2$-quasitrace on $B$.
Since $\mathcal Z$ is exact,
\cite[Theorem~5.11]{Haa14} implies that
$\tau\circ\sigma$ and $\tau\circ\lambda$ are tracial
states on $\mathcal Z$.
By uniqueness of its trace,
\[
    \tau\circ\sigma
    =\tau_{\mathcal Z}
    =\tau\circ\lambda.
\]
It follows that 
that
\begin{equation*}
    d_\tau(\sigma(a))
    =
    d_{\tau_{\mathcal Z}}(a)
    <
    d_{\tau_{\mathcal Z}}(b)
    =
    d_\tau(\lambda(b)).
\end{equation*}

We show that nevertheless
$\sigma(a)$ is not Cuntz subequivalent to $\lambda(b)$, even inside the larger \Cs-algebra $\prod_{\omega}C(\cal{PU}(N_n),M_{N_n})$.
Lift $a$ and $b$ to sequences of positive contractions $(c_n)_n$ and $(d_n)_n$, respectively, which we may choose so that they satisfy $c_nd_n=0$. The last condition is justified by taking a lift $(h_n)_n$ of $h$, and $c_n=f_1(h_n)$, $d_n=(f_2+f_3)(h_n)$.
Since $\sigma$ is isometric,
\[
    \lim_{n\to\omega}\|c_n\|=\lim_{n\to\omega}\|\sigma_{N_n}(c_n)\|=\|\sigma(a)\|=1.
\]
Note that, whenever $c_n\neq0$, the relation $c_nd_n=0$ implies
that $d_n$ is a non-invertible matrix of size $N_n$. Moreover, if either $c_n=0$ or $d_n=0$, the estimate we want to show below is immediate. 

Fix $x=[(x_n)_n]_\omega\in \prod_{\omega}C(\cal{PU}(N_n),M_{N_n})$.
Applying Lemma \ref{lem:obstruction-lemma}, we obtain that
\[
\begin{aligned}
    \|\sigma(a)-x\lambda(b)x^*\|
    &=
    \lim_{n\to\omega}
    \|\sigma_{N_n}(c_n)
        -x_n\lambda_{N_n}(d_n)x_n^*\|\\
    &\geq
    \lim_{n\to\omega}\|c_n\|\\
    &=1.
\end{aligned}
\]
Hence, let $y$ be any element in the stabilisation of $\prod_{\omega}C(\cal{PU}(N_n),M_{N_n})$, and $\widetilde{x}\in \prod_{\omega}C(\cal{PU}(N_n),M_{N_n})$ the element satisfying $\widetilde x\otimes e_{11}=(1\otimes e_{11})y(1\otimes e_{11})$.
Recall that we identify $\prod_{\omega}C(\cal{PU}(N_n),M_{N_n})$ with the corner $\big(\prod_{\omega}C(\cal{PU}(N_n),M_{N_n})\big)\otimes e_{11}$ inside $\big(\prod_{\omega}C(\cal{PU}(N_n),M_{N_n})\big)\otimes\mathcal K$.
We then have that 
\begin{align*}
\|\sigma(a)-y\lambda(b)y^*\|&\geq\|\sigma(a)-(1\otimes e_{11})y\lambda(b)y^*(1\otimes e_{11})\| \\
&=\|\sigma(a)-\widetilde x\lambda(b){\widetilde x}^*\|\\
&=1
\end{align*}
and thus $\sigma(a)$ is not Cuntz subequivalent to $\lambda(b)$ in $\prod_{\omega}C(\cal{PU}(N_n),M_{N_n})$, and therefore also inside $B$.
Since \(b\neq0\), simplicity of \(\mathcal Z\) and unitality of \(\lambda\) imply that \(\lambda(b)\) is full in \(B\). If \(B\) had strict comparison, Lemma~\ref{lem:strict-comparison-full-elements} would give \(\sigma(a)\precsim\lambda(b)\), contradicting the preceding conclusion.
\end{proof}

\begin{corollary}\label{cor:non-strict-comparison-ZZ}
The universal unital free product
$\mathcal Z*_{\mathbb C}\mathcal Z$ does not have
strict comparison of positive elements.
\end{corollary}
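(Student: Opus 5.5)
The plan is to apply Theorem~\ref{thm:obstruction-Cuntz} with $B$ the image of $\calZ\ast_{\mathbb C}\calZ$ under the canonical quotient map. Then I pull back the failure of strict comparison, using the explicit non-subequivalence obtained in that proof rather than the statement of the theorem alone.

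First I fix a sequence $(N_n)_n$ of powers of a single prime $p$, for instance $N_n=p^n$, and a free ultrafilter $\omega$. By \cite[Corollary 6.3]{JS99}, $\calZ$ embeds unitally into $M_{p^\infty}$. The UHF algebra $M_{p^\infty}$ embeds unitally into $\prod_\omega M_{p^n}$: approximate each element by elements of $M_{p^k}$ and map $M_{p^\infty}=\overline{\bigcup_k M_{p^k}}$ via the standard inclusions $M_{p^k}\subseteq M_{p^n}$ for $n\geq k$. So the hypotheses of Theorem~\ref{thm:obstruction-Cuntz} hold.

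By the universal property there is a unital $\ast$-homomorphism $q\colon\calZ\ast_{\mathbb C}\calZ\to\prod_{\omega}C(\cal{PU}(N_n),M_{N_n})$ with $q\circ\iota_0=\sigma$ and $q\circ\iota_1=\lambda$. Take $a,b\in\calZ$ exactly as in the proof of Theorem~\ref{thm:obstruction-Cuntz}, so that $d_{\tau_\calZ}(a)<d_{\tau_\calZ}(b)$, and consider $\iota_0(a)$ and $\iota_1(b)$ in $\calZ\ast_{\mathbb C}\calZ$.

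\emph{Failure of subequivalence.} Suppose $\iota_0(a)\precsim\iota_1(b)$. Since Cuntz subequivalence is preserved by $\ast$-homomorphisms, applying $q\otimes\id_\K$ would give $\sigma(a)\precsim\lambda(b)$ in the ultraproduct. The proof of Theorem~\ref{thm:obstruction-Cuntz} shows this is false.

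\emph{Trace condition.} Let $\tau\in\mathrm{QT}_1(\calZ\ast_{\mathbb C}\calZ)$. Since $\calZ$ is exact, \cite[Theorem~5.11]{Haa14} shows that $\tau\circ\iota_j$ is a tracial state on $\calZ$. By uniqueness of the trace it equals $\tau_\calZ$, so
\[
d_\tau(\iota_0(a))=d_{\tau_\calZ}(a)<d_{\tau_\calZ}(b)=d_\tau(\iota_1(b)).
\]
Also $\iota_1(b)$ is full, because $\calZ$ is simple and $\iota_1$ is a unital embedding. If $\calZ\ast_{\mathbb C}\calZ$ had strict comparison, Lemma~\ref{lem:strict-comparison-full-elements} would give $\iota_0(a)\precsim\iota_1(b)$, which contradicts the previous step.

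The only delicate step is the choice of target. The failure of subequivalence has to be detected after applying $q$, so it is essential to use the concrete pair $\sigma(a),\lambda(b)$ from the earlier proof, and not merely the conclusion that $q(\calZ\ast_{\mathbb C}\calZ)$ fails strict comparison, since that conclusion does not pass back to the domain. The remaining verifications are routine.
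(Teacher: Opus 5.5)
Your proposal is correct and follows essentially the same argument as the paper. You pull back the explicit non-subequivalence $\sigma(a)\not\precsim\lambda(b)$ from the proof of Theorem~\ref{thm:obstruction-Cuntz} through $q$ to get $\iota_0(a)\not\precsim\iota_1(b)$, then combine uniqueness of the trace on $\mathcal Z$, fullness of $\iota_1(b)$, and Lemma~\ref{lem:strict-comparison-full-elements}; your justification of the embedding $\mathcal Z\subseteq M_{p^\infty}\subseteq\prod_\omega M_{p^n}$ matches the paper's remark after Theorem~\ref{thm:non-unique-embeddings}.
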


\begin{proof}
Denote the canonical unital embeddings of $\calZ$ into  $\mathcal Z\ast_{\mathbb C}\mathcal Z$ by $\iota_0,\iota_1$. Here, we view $\calZ$ unitally embedded into $\prod_{\omega}M_{N_n}$ for a sequence of powers of primes $(N_n)_n$, and let $\sigma,\lambda\colon \calZ \to \prod_{\omega}C(\cal{PU}(N_n),M_{N_n})$ denote the unital $\ast$-homomorphisms induced by the sequences of maps $(\sigma_{N_n})_n$ and $(\lambda_{N_n})_n$ defined in Notation \ref{not:two-embeddings}.
By (the proof of) Theorem \ref{thm:obstruction-Cuntz}, we can find positive elements satisfying 
\[
    d_{\tau_{\mathcal Z}}(a)
    <d_{\tau_{\mathcal Z}}(b),
    \quad \text{and}\quad 
    \sigma(a)\not\precsim\lambda(b).
\]
The universal property of  $\mathcal Z\ast_{\mathbb C}\mathcal Z$ yields a unital $\ast$-homomorphism
\[
q\colon \mathcal Z\ast_{\mathbb C}\mathcal Z\to \prod_{\omega}C(\cal{PU}(N_n),M_{N_n})
\]
satisfying $q\circ\iota_0=\sigma$ and $q\circ\iota_1=\lambda$.
Since $\ast$-homomorphisms preserve Cuntz subequivalence,
\[
    \iota_0(a)\not\precsim\iota_1(b).
\]
On the other hand, every normalized $2$-quasitrace
$\tau$ on $\mathcal Z\ast_{\mathbb C}\mathcal Z$ restricts to $\tau_{\mathcal Z}$ on
both copies, and hence
\[
    d_\tau(\iota_0(a))
    =d_{\tau_{\mathcal Z}}(a)
    <d_{\tau_{\mathcal Z}}(b)
    =d_\tau(\iota_1(b)).
\]
Since $b\neq0$ and $\mathcal Z$ is simple,
$\iota_1(b)$ is full in $\mathcal Z\ast_{\mathbb C}\mathcal Z$, 
Lemma~\ref{lem:strict-comparison-full-elements}
therefore implies that $\mathcal Z\ast_{\mathbb C}\mathcal Z$ cannot have strict comparison.
\end{proof}


\section{Embeddings of $\calZ$ into selfless \Cs-algebras}\label{sec:asymptotic}

In this section, we show that unital embeddings of $\calZ$ into selfless \Cs-algebras are unique up to strong asymptotic unitary equivalence. We note that in the tracial setting, this upgrades the previous uniqueness up to approximate unitary equivalence from \cite{Rob12}. In the non-tracial case, we first find an asymptotic unitary equivalence using Gabe's work from \cite{Gab24}, and then upgrade it to a \emph{strong} asymptotic unitary equivalence.

\begin{definition}
Let $A$ and $B$ be unital \Cs-algebras.
Two unital $\ast$-homomorphisms
$\varphi,\psi\colon A\to B$ are said to be \emph{asymptotically
unitarily equivalent} if there exists a norm-continuous path
$(u_t)_{t\geq0}\subseteq\mathcal U(B)$ such that 
\[
    \lim_{t\to\infty}
    \|u_t\varphi(a)u_t^*-\psi(a)\|=0
\]
for all $a\in A$.

If, additionally, the unitary path satisfies $u_0=1_B$, then $\varphi$ and $\psi$ are said to be \emph{strongly asymptotically unitarily equivalent}.
\end{definition}

We first prove the following uniqueness result for unital embeddings of $\calZ$ into separable, unital, stable rank one \Cs-algebras with vanishing $K_1$-obstruction. It will be used as a reduction theorem to pass from approximate unitary equivalence to strong asymptotic unitary equivalence.

\begin{theorem}\label{thm:Cu-asymptotic-uniqueness}
Let $C$ be a separable, unital \Cs-algebra of stable rank one,
and suppose that $K_1(J)=0$ for every closed two-sided ideal
$J\subseteq C$.
Then any two unital
$\ast$-homomorphisms $\varphi,\psi\colon\mathcal Z\to C$ that satisfy $\Cus(\varphi)=\Cus(\psi)$ are
strongly asymptotically unitarily equivalent.
\end{theorem}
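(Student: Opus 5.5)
Throughout, I regard $\mathcal Z$ as an inductive limit of prime dimension drop algebras $Z_{p,q}$. Robert's classification of homomorphisms from one-dimensional NCCW complexes into stable rank one targets \cite{Rob12} then reduces the problem to two steps. First I show that $\Cus(\varphi)=\Cus(\psi)$ forces approximate unitary equivalence. Second, I use the $K_1$ hypothesis to upgrade this to a strong asymptotic equivalence.

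For the first step, note that $\mathcal Z$ is an inductive limit of one-dimensional NCCW complexes with $K_1(\mathcal Z)=0$. Robert's theorem says that, for a target of stable rank one, homomorphisms from such algebras are classified up to approximate unitary equivalence by the augmented Cuntz semigroup functor $\Cus^\sim$. Because $\mathcal Z$ has vanishing $K_1$ and the maps are unital, equality of $\Cus(\varphi)$ and $\Cus(\psi)$, together with preservation of the unit class, should determine $\Cus^\sim$. This gives unitaries $w_n\in\mathcal U(C)$ with $w_n\varphi(z)w_n^*\to\psi(z)$ for all $z$.

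For the second step, I want to connect consecutive unitaries by short paths. Set $v_n=w_{n+1}^*w_n$. On a finite set $F_n$ of $\mathcal Z$, the unitary $v_n$ approximately commutes with $\varphi(F_n)$, up to an error $\varepsilon_n$ with $\sum_n\varepsilon_n<\infty$. I then need a basic homotopy lemma: a unitary almost commuting with $\varphi(\mathcal Z)$ can be joined to $1$ by a path that almost commutes along the way, with controlled length.

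To prove this lemma I would pass to $C_\omega\cap\varphi(\mathcal Z)'$. Alternatively, I would consider $\mathcal Z\otimes C(\mathbb T)\to C$, $z\otimes f\mapsto\varphi(z)f(v_n)$. This map is approximately multiplicative, and I compare it with $z\otimes f\mapsto\varphi(z)f(1)$ via Cuntz-semigroup data. The hypothesis that $K_1(J)=0$ for all ideals is exactly what kills the rotation/Bott-type obstruction, namely the class of $v_n$ in $K_1$ of the hereditary subalgebras $\overline{\varphi(e)C\varphi(e)}$. Together with stable rank one, this lets $\Cus$ data on $\mathcal Z\otimes C(\mathbb T)$ coincide for both maps. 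A uniqueness theorem for $C(\mathbb T)\otimes$ dimension-drop algebras, again in Robert's framework, since $\mathcal Z\otimes C(\mathbb T)$ is still a limit of one-dimensional NCCW-type objects in the appropriate sense, then yields that $v_n$ is connected to $1$ within the almost-commutant.

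With such paths, I concatenate $w_1$ to $w_2$ to $w_3$ and so on, parametrized on $[n,n+1]$. Since each $w_n$ is connected to $1$, I also prepend a path from $1$ to $w_1$, which need not be almost commuting. The resulting continuous path $(u_t)$ has $u_0=1$ and satisfies $\|u_t\varphi(z)u_t^*-\psi(z)\|\to0$. The main obstacle is the basic homotopy lemma: verifying that the $K_1$-of-ideals hypothesis suffices to kill all obstructions in the Cuntz semigroup of $C(\mathbb T)\otimes\mathcal Z$-type maps. Here I would first try to reduce to Robert's classification by invoking the $\Cus^\sim$ of $C(\mathbb T,\mathcal Z)$ and showing that the $K_1$-valued component vanishes on ideals.
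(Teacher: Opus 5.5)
\textbf{There is a genuine gap.} It sits exactly where you place the main obstacle: the basic homotopy lemma is never proved, and the route you sketch for it cannot work.

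\emph{Robert's theorem does not cover your domain.} Robert's classification needs the domain to be an inductive limit of one-dimensional NCCW complexes \emph{with trivial $K_1$}, because the augmented Cuntz semigroup does not see $K_1$-data. But $K_1(\mathcal Z\otimes C(\mathbb T))\cong\mathbb Z$, and the building blocks $Z_{p,q}\otimes C(\mathbb T)$ are two-dimensional. So no uniqueness theorem for maps out of $\mathcal Z\otimes C(\mathbb T)$ is available in that framework.

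\emph{A uniqueness theorem cannot produce a homotopy here.} Even granting such a theorem, it would only give approximate unitary equivalence. The two maps you compare, $z\otimes f\mapsto\varphi(z)f(v_n)$ and $z\otimes f\mapsto\varphi(z)f(1)$, are not approximately unitarily equivalent unless $v_n$ is close to $1$. Take $f\in C(\mathbb T)_+$ vanishing at $1$ but not on $\operatorname{sp}(v_n)$. Then $\|u f(v_n)u^*-f(1)1_C\|=\|f(v_n)\|$ for every unitary $u$. So their Cuntz data cannot coincide, whatever $K_1$-hypothesis you impose. The homotopy parameter has to enter the classification problem somewhere, and in your setup it does not. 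Passing to $C_\omega\cap\varphi(\mathcal Z)'$ removes the approximate multiplicativity. However, you would then need connectedness information about unitaries in that relative commutant, which is not available for a general stable rank one $C$.

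\emph{What the paper does instead.} The missing idea is to put the interval into the \emph{target}, so that Robert's theorem is only ever applied to the domain $\mathcal Z$.
\begin{itemize}
\item Form $E=\{f\in C([0,1],C): f(0)\in\varphi(\mathcal Z),\ f(1)\in\psi(\mathcal Z)\}$. The hypothesis $K_1(J)=0$ is used precisely here: it gives stable rank one of $E$ and of $C([0,1],C)$, and it lets one compute $\Cus(E)$ as a pullback. It is not used to kill a Bott obstruction.
\item Robert's existence theorem lifts the ``constant'' morphism $x\mapsto(x,x,[s\mapsto j(x)])$ to a unital $\Theta\colon\mathcal Z\to E$. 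This is a continuous path of unital homomorphisms $\theta_s$ from $\varphi\circ\alpha_0$ to $\psi\circ\alpha_1$.
\item Robert's uniqueness theorem in $C([0,1],C)$, comparing $\Theta$ with the constant path at $\theta_0$, yields unitary paths $u_{n,s}$ with $\sup_s\|u_{n,s}\theta_0(z)u_{n,s}^*-\theta_s(z)\|\to0$.
\item Set $v_{n,s}=u_{n,s}u_{n,0}^*$. The paths $c_{n,s}=v_{n,s}^*v_{n+1,s}$ run from $1$ to $v_{n,1}^*v_{n+1,1}$. They almost commute with $\theta_0(\mathcal Z)$ uniformly in $s$, because $v_{n,s}$ and $v_{n+1,s}$ conjugate $\theta_0$ close to the same $\theta_s$. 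These are exactly the homotopies your lemma was meant to supply, and concatenating the paths $v_{n,1}c_{n,s}$ gives the asymptotic path starting at $1$.
\item Finally, $\alpha_0$ and $\alpha_1$ are removed using that unital endomorphisms of the strongly self-absorbing algebra $\mathcal Z$ are strongly asymptotically inner (Dadarlat--Winter).
\end{itemize}

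Your first step, approximate unitary equivalence from $\Cus(\varphi)=\Cus(\psi)$, is fine modulo the hedge on $\Cus^\sim$. It is not where the proof lives.
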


\begin{proof}
Put $j:=\mathrm{Cu}(\varphi)=\mathrm{Cu}(\psi)$ and define
\[
    E:=\{f\in C([0,1],C)\mid
    f(0)\in\varphi(\mathcal Z),\ 
    f(1)\in\psi(\mathcal Z)\}.
\]
Observe that $E$ is isomorphic to the pullback \Cs-algebra $C([0,1],C) \oplus_{C\oplus C}(\calZ\oplus\calZ)$ along the maps $\ev_0\oplus\ev_1$ and $\varphi\oplus\psi$.
Each fibre has stable rank one and vanishing $K_1$ for every
closed two-sided ideal. Note also that $E$ is a continuous field over $[0,1]$ with fibres isomorphic to $C$ for $0<t<1$, and to $\calZ$ at the endpoints. Hence, both $E$ and $C([0,1],C)$ have
stable rank one by \cite[Theorems 1.9 and 1.11]{ABPP14}.

By \cite[Corollaries 2.7 and 3.5]{APS11}, there is a natural identification
\[
    \mathrm{Cu}(E)\cong
    \left\{
    (x_0,x_1,f)\in
    \mathrm{Cu}(\mathcal Z)^2
    \times\operatorname{Lsc}([0,1],\mathrm{Cu}(C))\mid
    f(0)=j(x_0),\ f(1)=j(x_1)
    \right\}.
\]
Under this identification, define
\[
    \Lambda\colon\mathrm{Cu}(\mathcal Z)\to\mathrm{Cu}(E),
    \qquad
    \Lambda(x)=(x,x,[s\mapsto j(x)]).
\]
This is a morphism in the category of abstract Cuntz semigroups. Indeed, zero, addition,
order, and increasing suprema are computed coordinatewise.
The constant-function map
$\mathrm{Cu}(C)\to\operatorname{Lsc}([0,1],\mathrm{Cu}(C))$
is induced by the inclusion $C\to C([0,1],C)$, and therefore
preserves compact containment. Thus, if $x'\ll x$, each
coordinate of $\Lambda(x')$ is compactly contained in the
corresponding coordinate of $\Lambda(x)$. Given an increasing
sequence of triples whose supremum dominates $\Lambda(x)$,
one may choose a common index at which all three coordinates
dominate those of $\Lambda(x')$. This proves that
$\Lambda(x')\ll\Lambda(x)$.
Moreover, $\Lambda([1_{\mathcal Z}])=[1_E]$.

By \cite[Theorems 1.0.1 and 3.2.2(i)]{Rob12}, there exists a
$\ast$-homomorphism $\Theta\colon\mathcal Z\to E$ inducing
$\Lambda$. Put $p:=\Theta(1_{\mathcal Z})$. Since
$[p]=[1_E]$ and
\[
    [p]+[1_E-p]=[1_E]=[p],
\]
cancellation of projections gives $[1_E-p]=0$, and hence
$p=1_E$. Thus $\Theta$ is unital.
Set $\theta_s:=\operatorname{ev}_s\circ\Theta$, and observe that
\[
    \mathrm{Cu}(\theta_s)=j,\quad
    \theta_0=\varphi\circ\alpha_0,\quad
    \theta_1=\psi\circ\alpha_1,
\]
for all $s\in[0,1]$, where $\alpha_0,\alpha_1\colon\mathcal Z\to\mathcal Z$ are some unital endomorphisms.

If we view $\Theta$ as a homomorphism into $C([0,1],C)$, and using \cite[Corollary 2.7]{APS11}, we see that its Cuntz-semigroup map agrees
with that of the constant homomorphism
$z\mapsto[s\mapsto\theta_0(z)]$.
Applying \cite[Theorems 1.0.1 and 3.2.2(i)]{Rob12} in
$C([0,1],C)$, we obtain norm-continuous unitary paths
$(u_{n,s})_{s\in[0,1]}\subseteq\mathcal U(C)$, for $n\geq0$, such that
\[
    \sup_{s\in[0,1]}
    \|u_{n,s}\theta_0(z)u_{n,s}^*-\theta_s(z)\|
    \xrightarrow{n\to\infty}0
\]
for all $z\in\mathcal Z$.
Put $v_{n,s}:=u_{n,s}u_{n,0}^*$. Then $v_{n,0}=1_C$ and
\begin{align*}
    e_n(z)&:=\sup_{s\in[0,1]}
    \|v_{n,s}\theta_0(z)v_{n,s}^*-\theta_s(z)\| \\
    &\leq \sup_{s\in[0,1]} (
    \|u_{n,s}(u_{n,0}^*\theta_0(z)u_{n,0}-\theta_0(z))u_{n,s}^*\| + \|u_{n,s}\theta_0(z)u_{n,s}^*-\theta_s(z)\|)\\
    &= 
    \|u_{n,0}^*\theta_0(z)u_{n,0}-\theta_0(z)\| + \sup_{s\in[0,1]}\|u_{n,s}\theta_0(z)u_{n,s}^*-\theta_s(z)\|
\end{align*}
and thus $e_n(z) \to0$ as $n\to\infty$ for all $z\in\calZ$.
Define $c_{n,s}:=v_{n,s}^*v_{n+1,s}$, and note that  
\begin{align*}
    \|[c_{n,s},\theta_0(z)]\|
    &=\|v_{n,s}
        \bigl(c_{n,s}\theta_0(z)-\theta_0(z)c_{n,s}\bigr)
        v_{n+1,s}^*\|\\
    &=\bigl\|
        v_{n,s}v_{n,s}^*v_{n+1,s}\theta_0(z)v_{n+1,s}^*
        -v_{n,s}\theta_0(z)v_{n,s}^*
        v_{n+1,s}v_{n+1,s}^*
      \bigr\|\\
    &=\|v_{n+1,s}\theta_0(z)v_{n+1,s}^*
        -v_{n,s}\theta_0(z)v_{n,s}^*\|\\
    &=\bigl\|
        \bigl(v_{n+1,s}\theta_0(z)v_{n+1,s}^*-\theta_s(z)\bigr)
        +\bigl(\theta_s(z)-v_{n,s}\theta_0(z)v_{n,s}^*\bigr)
      \bigr\|\\
    &\leq
        \|v_{n+1,s}\theta_0(z)v_{n+1,s}^*-\theta_s(z)\|
        +\|v_{n,s}\theta_0(z)v_{n,s}^*-\theta_s(z)\|\\
    &\leq e_{n+1}(z)+e_n(z)
\end{align*}
for all $s\in[0,1]$, $n\geq0$, and $z\in\calZ$.
Since $v_{n,0}=1_C$ for every $n$, we have that
\[
    c_{n,0}=1_C,\qquad
    c_{n,1}=v_{n,1}^*v_{n+1,1}.
\]
Thus, setting, for each $n\geq1$, $s\in[0,1]$,
\[
    r_{n+s}:=v_{n,1}c_{n,s}
\]
we have defined a continuous unitary path on $[n,n+1]$ with endpoints
\begin{align*}
    r_n
    &=v_{n,1}c_{n,0}=v_{n,1},\\
    r_{n+1}
    &=v_{n,1}c_{n,1}
      =v_{n,1}v_{n,1}^*v_{n+1,1}
      =v_{n+1,1}.
\end{align*}
The consecutive paths therefore agree at their common endpoints.
Together with $r_t:=v_{1,t}$ for $t\in[0,1]$, they define a
norm-continuous unitary path $(r_t)_{t\geq0}$ with $r_0=1_C$.
Now, for every $n\geq1$, $s\in[0,1]$, and $z\in\mathcal Z$,
\begin{align*}
    \|r_{n+s}\theta_0(z)r_{n+s}^*-\theta_1(z)\| &=\|v_{n,1}c_{n,s}\theta_0(z)c_{n,s}^*v_{n,1}^*
        -\theta_1(z)\|\\
    &\leq
    \|v_{n,1}
        \bigl(c_{n,s}\theta_0(z)c_{n,s}^*-\theta_0(z)\bigr)
        v_{n,1}^*\|\\
    & \quad +
    \|v_{n,1}\theta_0(z)v_{n,1}^*-\theta_1(z)\|\\
    &=
    \|[c_{n,s},\theta_0(z)]\|
    +\|v_{n,1}\theta_0(z)v_{n,1}^*-\theta_1(z)\|\\
    &\leq
    \bigl(e_n(z)+e_{n+1}(z)\bigr)+e_n(z)\\
    &=2e_n(z)+e_{n+1}(z).
\end{align*}
Since the bound is independent of $s$, we obtain
\[
    \sup_{s\in[0,1]}
    \|r_{n+s}\theta_0(z)r_{n+s}^*-\theta_1(z)\|
    \leq2e_n(z)+e_{n+1}(z)
    \xrightarrow{n\to\infty}0.
\]
Thus $\theta_0$ and $\theta_1$ are strongly asymptotically
unitarily equivalent.

Finally, since $\mathcal Z$ is strongly self-absorbing and
$K_1$-injective (in fact, all strongly self-absorbing \Cs-algebras have this property by \cite{Win11}), \cite[Theorem~2.2]{DW09} gives norm-continuous
paths $(w_{i,t})_{t\geq0}\subseteq\mathcal U(\mathcal Z)$,
for $i=0,1$, such that $w_{i,0}=1_{\mathcal Z}$ and
\[
    \lim_{t\to\infty}
    \|w_{i,t}\alpha_i(z)w_{i,t}^*-z\|=0
\]
for all $z\in\calZ$.
Define, for $t\ge0$,
\[
    a_t:=\varphi(w_{0,t}^*),
    \qquad
    b_t:=\psi(w_{1,t}).
\]
These are norm-continuous unitary paths in $C$ with
$a_0=b_0=1_C$. Using
$\theta_0=\varphi\circ\alpha_0$ and
$\theta_1=\psi\circ\alpha_1$, we obtain
\begin{align*}
    \|a_t\varphi(z)a_t^*-\theta_0(z)\|
    &=\|\varphi(w_{0,t}^*zw_{0,t}-\alpha_0(z))\|\\
    &=\|w_{0,t}^*zw_{0,t}-\alpha_0(z)\|\\
    &=\|z-w_{0,t}\alpha_0(z)w_{0,t}^*\|
    \xrightarrow{t\to\infty}0,
\end{align*}
and
\begin{align*}
    \|b_t\theta_1(z)b_t^*-\psi(z)\|
    &=\|\psi(w_{1,t}\alpha_1(z)w_{1,t}^*-z)\|\\
    &=\|w_{1,t}\alpha_1(z)w_{1,t}^*-z\|
    \xrightarrow{t\to\infty}0
\end{align*}
for every $z\in\mathcal Z$.
The path $(b_tr_ta_t)_{t\geq0}$ implements the required
strong asymptotic unitary equivalence between $\varphi$ and
$\psi$.
\end{proof}

\begin{lemma}\label{lem:free-Z-path}
The canonical free factor embeddings
\[
    \iota_1,\iota_2\colon\mathcal Z\longrightarrow
    (\mathcal Z,\tau_{\mathcal Z})
    \ast(\mathcal Z,\tau_{\mathcal Z})
\]
are strongly asymptotically unitarily equivalent.
\end{lemma}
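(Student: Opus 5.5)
The plan is to reduce the statement to Theorem~\ref{thm:Cu-asymptotic-uniqueness}, applied to the reduced free product $C:=(\mathcal Z,\tau_{\mathcal Z})\ast(\mathcal Z,\tau_{\mathcal Z})$, or, if its hypotheses are not directly available there, to a suitable separable subalgebra. First we collect the regularity of $C$.
\begin{itemize}
\item It is separable and unital.
\item By Dykema's results on reduced free products of tracial algebras with diffuse factors, it is simple with unique trace $\tau=\tau_{\mathcal Z}\ast\tau_{\mathcal Z}$.
\item It has stable rank one, by Dykema--Haagerup--R{\o}rdam type results for reduced free products of infinite-dimensional tracial algebras, or alternatively because such free products are selfless (\cite{HKR25,Rob25}) and selfless tracial algebras have stable rank one and strict comparison by \cite[Theorem~3.1]{Rob25}.
\end{itemize}
Simplicity means the only ideals are $0$ and $C$, so the ideal hypothesis of Theorem~\ref{thm:Cu-asymptotic-uniqueness} reduces to $K_1(C)=0$. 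This follows from Germain's $K$-theory computation for reduced free products of nuclear (hence $K$-amenable) algebras: there is an exact sequence
\[
K_1(\mathbb C)\to K_1(\mathcal Z)\oplus K_1(\mathcal Z)\to K_1(C)\to K_0(\mathbb C)\to K_0(\mathcal Z)\oplus K_0(\mathcal Z).
\]
Here $K_1(\mathcal Z)=0$, and the map $\mathbb Z\to\mathbb Z^2$ given by $1\mapsto(1,1)$ is injective, so $K_1(C)=0$.

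The main step is to verify $\Cus(\iota_1)=\Cus(\iota_2)$. Since $C$ is simple, monotracial, of stable rank one and has strict comparison, an element of $\Cus(C)$ is determined by $d_\tau$ when it is not the class of a projection. A projection class is determined by its $K_0$ class, together with the fact that such classes are compact. The same holds in $\Cus(\mathcal Z)\cong\mathbb N\sqcup(0,\infty]$. Now $\tau\circ\iota_i=\tau_{\mathcal Z}$ for $i=1,2$, so $d_\tau\circ\Cus(\iota_1)=d_\tau\circ\Cus(\iota_2)$. Both maps send the compact elements $n[1]$ to $n[1_C]$, and they send non-compact elements of $\Cus(\mathcal Z)$ to non-compact elements; this holds because $\iota_i(a)$ for $a$ not Cuntz equivalent to a projection has spectrum accumulating at $0$, and in stable rank one the compact elements are exactly the projection classes. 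Because $K_0(C)\cong\mathbb Z$ is generated by $[1]$, two elements in the image with the same rank and the same compactness type coincide. To avoid needing the full structure theorem for $\Cus(C)$, I would argue directly: for non-compact $x=[a]$, strict comparison gives $\iota_1(a)\precsim\iota_2(a')$ for every $a'$ of strictly larger rank with $\iota_2(a')\ll\iota_2(a)$. Writing $x$ as the supremum of a rapidly increasing sequence and interleaving then yields $\Cus(\iota_1)(x)\le\Cus(\iota_2)(x)$, and by symmetry equality.

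I expect this interleaving argument to be the main obstacle. It requires finding, inside $\mathcal Z$, elements $x_k\ll x_{k+1}$ with strictly increasing ranks, which is possible because non-compact elements of $\Cus(\mathcal Z)$ are suprema of elements of strictly smaller rank. Compact elements are handled by the unit and cancellation (Theorem of R{\o}rdam--Winter). Once the hypotheses are verified, Theorem~\ref{thm:Cu-asymptotic-uniqueness} applies directly to $\varphi=\iota_1$ and $\psi=\iota_2$ and produces the strong asymptotic unitary equivalence.
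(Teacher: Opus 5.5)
Your proof is correct. Its skeleton is the paper's: regularity of $C$, then $K_1(C)=0$ from the free-product exact sequence, then Theorem~\ref{thm:Cu-asymptotic-uniqueness}. The genuine difference is how you obtain $\Cus(\iota_1)=\Cus(\iota_2)$.

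The paper cites \cite[Proposition~6.3.1]{Rob12} to get approximate unitary equivalence of $\iota_1,\iota_2$, which immediately forces equal Cuntz semigroup maps. You instead compute directly:
\begin{itemize}
\item The compact elements of $\Cus(\mathcal Z)$ are the $n[1_{\mathcal Z}]$, and these are handled by unitality.
\item A non-compact $x$ is the supremum of an increasing sequence $(x_k)_k$ with $d_{\tau_{\mathcal Z}}(x_k)<d_{\tau_{\mathcal Z}}(x)$ and $d_{\tau_{\mathcal Z}}(x_k)$ finite.
\item Every normalized quasitrace on $C$ restricts to $\tau_{\mathcal Z}$ on both copies, and $\Cus(\iota_2)(x)$ is full. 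So strict comparison gives $\Cus(\iota_1)(x_k)\leq\Cus(\iota_2)(x)$.
\item Passing to the supremum and symmetrizing finishes.
\end{itemize}

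This is more elementary and more general. It shows that in any unital \Cs-algebra with strict comparison, all unital embeddings of $\mathcal Z$ induce the same map on $\Cus$. It is the mechanism of Corollary~\ref{cor:non-strict-comparison-ZZ} read in the other direction, and it uses neither stable rank one, simplicity, nor monotraciality of $C$ at this step. The paper's route is shorter. In either case, Robert's classification still enters through Theorem~\ref{thm:Cu-asymptotic-uniqueness}, so the saving is local.

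A few cleanups:
\begin{itemize}
\item The sentence ``$\iota_1(a)\precsim\iota_2(a')$ for every $a'$ of strictly larger rank with $\iota_2(a')\ll\iota_2(a)$'' is self-contradictory as written, since larger rank is incompatible with $\ll$. The inequality $\Cus(\iota_1)(x_k)\leq\Cus(\iota_2)(x)$ whenever $d_{\tau_{\mathcal Z}}(x_k)<d_{\tau_{\mathcal Z}}(x)$ is what you need.
\item Several pieces can be dropped: the preliminary discussion of the structure of $\Cus(C)$, the preservation of non-compactness, $K_0(C)\cong\mathbb Z$, the requirement $x_k\ll x_{k+1}$, and the appeal to cancellation. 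What you call the main obstacle is immediate from $\Cus(\mathcal Z)\cong\mathbb Z_{\geq0}\sqcup(0,\infty]$.
\item Since ranks may exceed $1$, apply Definition~\ref{def:strict-comparison} in $C\otimes\mathcal K$ rather than Lemma~\ref{lem:strict-comparison-full-elements} as stated for $A_+$. Its argument adapts directly, with fullness of $\Cus(\iota_2)(x)$ giving $0<\rho(1_C)<\infty$ whenever $d_\rho$ of that element equals $1$.
\end{itemize}
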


\begin{proof}
Put
\[
    (D,\tau_D):=
    (\mathcal Z,\tau_{\mathcal Z})
    \ast(\mathcal Z,\tau_{\mathcal Z}).
\]
By \cite[Theorems 3.1, 4.2, and 5.2]{Rob25}, $D$ is simple,
has stable rank one and strict comparison, and has a unique
$2$-quasitracial state. It is moreover separable and non-elementary.
The reduced free-product exact sequence in $K$-theory
that follows from \cite[Theorem 3.1 and Corollary 4.12]{FG} (see also \cite{Ger97,Tho03}) gives
\[
    K_1(D)\cong
    \ker\bigl(\mathbb Z\to\mathbb Z\oplus\mathbb Z,\ 
    n\mapsto(n,-n)\bigr)=0.
\]
Since $D$ is simple, and $\iota_1$, $\iota_2$ are
approximately unitarily equivalent by
\cite[Proposition 6.3.1]{Rob12}, they induce the same map of Cuntz semigroups, and the conclusion follows from
Theorem~\ref{thm:Cu-asymptotic-uniqueness}.
\end{proof}


\begin{lemma}\label{lem:product-Z}
Let $(A,\rho)$ be a selfless \Cs-probability space.
Then there exist an ultrafilter $\omega$ on a set $I$ and
a state-preserving unital embedding
\[
    \Theta\colon (A,\rho)*(\calZ,\tau_{\calZ})
    \longrightarrow (A_\omega,\rho_\omega)
\]
whose restriction to the first free factor is the diagonal
embedding of $A$.
\end{lemma}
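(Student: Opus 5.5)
We plan to derive the lemma from the definition of selflessness by producing a copy of $(\calZ,\tau_\calZ)$ inside an iterated free product of copies of the given nontrivial $(C,\kappa)$, or inside its ultrapower. We then use ultrapower reindexing to absorb everything into a single ultrapower of $A$.

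The first step is to use selflessness. It gives an ultrafilter $\omega_0$, a nontrivial \Cs-probability space $(C,\kappa)$, and an embedding $\sigma\colon (A,\rho)\ast(C,\kappa)\to(A_{\omega_0},\rho_{\omega_0})$ that restricts to the diagonal on $A$. Iterating this (the standard argument in \cite[Theorem~2.6]{Rob25}) yields, for every $k$, an embedding of $(A,\rho)\ast(C,\kappa)^{\ast k}$ into an ultrapower of $A$ that fixes $A$. Passing to a further ultrapower over $\mathbb N\times I$, we obtain an embedding of $(A,\rho)\ast(C,\kappa)^{\ast\infty}$ into some $(A_{\omega},\rho_\omega)$, still fixing the diagonal copy of $A$. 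Here we use the fact that state-preserving embeddings of free products compose well: if $(A,\rho)\ast(C,\kappa)\subseteq A_{\omega_0}$ fixes $A$, then applying $\sigma$ coordinatewise, combined with freeness in the ultrapower, gives the iterates.

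The second step, and the main obstacle, is to embed $(\calZ,\tau_\calZ)$ in a state-preserving way into an ultrapower of $(C,\kappa)^{\ast\infty}$, or of $(A,\rho)\ast(C,\kappa)^{\ast\infty}$, such that this copy is free from $A$. We would first try the following route. By Dykema's results, infinitely many free copies of a nontrivial $(C,\kappa)$ contain a Haar unitary, or at least a free semicircular family in the norm ultrapower. From free semicircular or Haar unitaries, one gets $\Cs_r(\mathbb F_\infty)$ with $\tau_{\mathbb F_\infty}$, and $(\Cs_r(\mathbb F_2),\tau)$ is selfless, monotracial, and has strict comparison \cite{AGKP25}. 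Robert's result \cite[Proposition~6.3.1]{Rob12}, or rather the existence part of \cite[Theorem~1.0.1]{Rob12}, produces a unital embedding $\calZ\to\Cs_r(\mathbb F_2)$, using that the target is simple, has stable rank one, is monotracial, and has strict comparison. Since the trace on $\Cs_r(\mathbb F_2)$ is unique, this embedding is automatically trace-preserving, so it gives the desired copy of $(\calZ,\tau_\calZ)$. Freeness from $A$ is inherited because the new copy sits inside the algebra generated by the free factors $C$.

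The final step is a diagonal or reindexing argument. The state-preserving embedding $(A,\rho)\ast(\calZ,\tau_\calZ)\to(A,\rho)\ast(C,\kappa)^{\ast\infty}$, or its ultrapower, exists by functoriality of reduced free products with respect to state-preserving embeddings. The only delicate point is that one copy sits in an ultrapower, and we handle this with the iterated-ultrapower identification $(A_\omega)_{\omega'}\hookrightarrow A_{\omega''}$, which preserves the diagonal and the limit states. Composing these maps gives $\Theta$. The map $\Theta$ is injective because the reduced free product state is GNS-faithful and $\Theta$ preserves the states.
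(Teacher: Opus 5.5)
Your argument is essentially the paper's: a unital copy of $\calZ$ inside a reduced free group \Cs-algebra from \cite[Proposition~6.3.1]{Rob12}, Blanchard--Dykema functoriality \cite[Theorem~1.3]{BD01}, and composition with an embedding of a free product into $A_\omega$ fixing $A$; note that this copy is trace preserving because $\calZ$ has a unique tracial state, not because $\Cs_r(\mathbb F_2)$ does. The only difference is that you rebuild by hand, via iteration and Haar unitaries in free powers of $(C,\kappa)$, the embedding $(A,\rho)*(\Cs_r(\mathbb F_\infty),\tau_{\mathbb F_\infty})\to(A_\omega,\rho_\omega)$ over $A$, which the paper takes directly from \cite[Theorem~2.6(v)]{Rob25}; citing it also avoids a delicate point in your sketch, namely that freeness with respect to $\rho_\omega$ (which need not be GNS-faithful) does not by itself give a homomorphism out of a reduced free product.
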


\begin{proof}
By \cite[Theorem~2.6(v)]{Rob25}, there exist an ultrafilter
$\omega$ and a state-preserving unital embedding
\[
    \sigma\colon
    (A,\rho)\ast(\Cs_r(\mathbb F_\infty),\tau_{\mathbb F_\infty})
    \longrightarrow (A_\omega,\rho_\omega)
\]
whose restriction to the first free factor is the diagonal embedding of $A$.
By \cite[Proposition~6.3.1]{Rob12}, and the paragraph following it,
there exists a unital embedding
\[
    j\colon\calZ\longrightarrow\Cs_r(\mathbb F_\infty).
\]
Since $\calZ$ has a unique tracial state, $j$ is trace preserving.
By \cite[Theorem~1.3]{BD01} $\id_A$ and $j$ induce a unital state preserving homomorphism 
\[
\id_A\ast j \colon (A,\rho)\ast (\calZ,\tau_{\calZ}) \longrightarrow (A,\rho)\ast(\Cs_r(\mathbb F_\infty),\tau_{\mathbb F_\infty}).
\]
Therefore, $\Theta:=\sigma\circ(\operatorname{id}_A*j)$ is the required unital state preserving embedding.
\end{proof}

\begin{lemma}\label{lem:transfer-unitary-path}
Let $A\subseteq E$ be a unital inclusion of \Cs-algebras.
Suppose that, for some ultrafilter $\omega$ on a set $I$,
there exists a unital embedding
\[
    \sigma\colon E\longrightarrow A_\omega
\]
whose restriction to $A$ is the diagonal embedding.
Let $\varphi,\psi\colon\calZ\to A$ be unital embeddings,
let $F,H$ be nonempty finite subsets of the unit ball
of $\calZ$, and let $\varepsilon,\eta>0$.

Suppose that $(v_t)_{t\in[0,1]}\subseteq\mathcal U(E)$
is a norm-continuous path with $v_0=1_E$ and
\[
    \sup_{t\in[0,1]}\max_{z\in F}
    \|v_t\varphi(z)v_t^*-\psi(z)\|<\varepsilon,
    \qquad
    \max_{z\in H}\|v_1\varphi(z)v_1^*-\psi(z)\|<\eta.
\]
Then there exists a norm-continuous path
$(w_t)_{t\in[0,1]}\subseteq\mathcal U(A)$ with $w_0=1_A$
such that 
\[
    \sup_{t\in[0,1]}\max_{z\in F}
    \|w_t\varphi(z)w_t^*-\psi(z)\|<\varepsilon,
    \qquad
    \max_{z\in H}\|w_1\varphi(z)w_1^*-\psi(z)\|<\eta.
\]
\end{lemma}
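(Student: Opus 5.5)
Push the path $(v_t)$ into $A_\omega$ via $\sigma$, lift it to a sequence of continuous unitary paths in $A$ starting at $1_A$, and pick a good index. Since the conditions in the statement are strict inequalities involving only finitely many elements, we expect them to survive passage to $\omega$-many coordinates.

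\textbf{Step 1: transfer to $A_\omega$.} Put $V_t:=\sigma(v_t)\in\mathcal U(A_\omega)$. This is a norm-continuous path with $V_0=1$. Since $\sigma$ is the diagonal embedding on $A$ and $\varphi(\calZ),\psi(\calZ)\subseteq A$, we have $\sigma(\varphi(z))=\varphi(z)$ and $\sigma(\psi(z))=\psi(z)$ as diagonal elements. Because $\sigma$ is isometric, the two strict estimates hold verbatim for $(V_t)$ in $A_\omega$.

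\textbf{Step 2: lift the path.} We need a lift of $(V_t)_{t\in[0,1]}$ to paths $(w^{(i)}_t)_{t\in[0,1]}\subseteq\mathcal U(A)$ with $w^{(i)}_0=1_A$, such that
\[
\lim_{i\to\omega}\sup_{t\in[0,1]}\|w^{(i)}_t-V_t\|=0
\]
in the sense of representatives. Concretely, we view $V$ as a unitary in $C([0,1],A_\omega)$ with $V(0)=1$. We pick a fine partition $0=t_0<\dots<t_m=1$ with $\|V_{t_k}-V_{t_{k+1}}\|<1$ and write
\[
V_t = V_{t_k}\exp\bigl(\log(V_{t_k}^*V_t)\bigr)\quad\text{on }[t_k,t_{k+1}].
\]
This realises $V$ on each subinterval as a product of exponentials $\exp(ih_k(t))$ with $h_k(t)$ self-adjoint, continuous in $t$, and vanishing at $t_k$.

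We then lift each $V_{t_k}^*V_{t_{k+1}}$ to unitaries in $A$ close to $1$ coordinatewise. This is possible because a unitary within distance $<1$ of $1$ in $A_\omega$ has logarithm $ih$, and $h$ lifts to self-adjoints $h^{(i)}$ of bounded norm. On each subinterval we use the linear interpolation $\exp\bigl(i\,s\,h_k^{(i)}\bigr)$. The resulting piecewise path in $A$ starts at $1_A$, is norm-continuous, and at the nodes equals a lift of $V_{t_k}$.

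By uniform continuity of $V$ on each subinterval, $V_t$ is within a small $\delta$ of the node interpolant $V_{t_k}\exp\bigl(i\,s\,\log(V_{t_k}^*V_{t_{k+1}})\bigr)$ for a suitable parametrisation $s$, provided the mesh is fine. Hence the lifted paths satisfy $\sup_t\|w^{(i)}_t-V_t\|\le 2\delta$ for $\omega$-many $i$. An equivalent shortcut is to note that $C([0,1],A)_\omega$ maps onto a C$^*$-algebra containing $C([0,1],A_\omega)$-type paths, but the explicit partition argument avoids that subtlety.

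\textbf{Step 3: choose the index.} Let $\gamma>0$ be the gap between $\varepsilon$ and the supremum over $t$ and $z\in F$ of $\|V_t\varphi(z)V_t^*-\psi(z)\|$, together with the analogous gap for $\eta$ over $H$. These gaps are positive since $[0,1]$ is compact and the supremum of a continuous function is attained; the strict inequality in the hypothesis then gives a positive margin.

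We take $\delta<\gamma/4$. Since elements of $F$ and $H$ lie in the unit ball,
\[
\|w\varphi(z)w^*-V\varphi(z)V^*\|\le 2\|w-V\|.
\]
Coordinatewise we therefore get
\[
\sup_t\max_{z\in F}\|w^{(i)}_t\varphi(z)w^{(i)*}_t-\psi(z)\|<\varepsilon
\]
and the $\eta$-estimate at $t=1$, for $\omega$-many $i$. Here we use $\|[x_i]_\omega\|=\lim_\omega\|x_i\|$, applied to the finitely many nodes, with uniform control coming from Step 2. Any such $i$ gives the required path $w_t:=w^{(i)}_t$.

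The main obstacle is Step 2. The uniform-in-$t$ approximation must hold at a single index $i$ for all $t$ simultaneously, not pointwise along $\omega$. The partition and logarithm construction with finitely many lifts handles exactly this.
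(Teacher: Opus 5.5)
Your overall architecture matches the paper's: finitely many nodes, lifts at the nodes, exponential interpolation inside $A$, and a single index taken from a finite intersection of $\omega$-large sets. Taking the logarithm $h_k$ of $V_{t_k}^*V_{t_{k+1}}$ in $A_\omega$ and lifting self-adjoints is a harmless variant. The paper instead lifts the node unitaries $\sigma(v_{t_j})$ to unitaries $r_{j,i}$ and takes logarithms in $A$ only at the selected index.

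The problem is that the step you yourself single out as the main obstacle is not justified by what you wrote. The assertion $\sup_t\|w^{(i)}_t-V_t\|\le 2\delta$ ``for $\omega$-many $i$'' has no index-wise meaning, because $V_t$ has no representatives chosen coherently in $t$. Your uniform-continuity argument also takes place entirely in $A_\omega$, so it only shows $\sup_t\|W_t-V_t\|\le 2\delta$ for $W_t:=[(w^{(i)}_t)_i]_\omega$. That is exactly pointwise-along-$\omega$ information: each $t$ comes with its own $\omega$-large set of good indices, and there are uncountably many $t$. So the ``uniform control coming from Step 2'' that Step 3 relies on is never established.

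The missing ingredient is an estimate at each fixed index, and your construction supplies it once you state it. For $t=t_k+s(t_{k+1}-t_k)$ with $s\in[0,1]$,
\[
    \|w^{(i)}_t-w^{(i)}_{t_k}\|=\|\exp(ish^{(i)}_k)-1\|\leq\|h^{(i)}_k\|.
\]
Choose the self-adjoint lifts with $\|h^{(i)}_k\|\leq\|h_k\|$; you can do this by applying the truncation $x\mapsto\max(-\|h_k\|,\min(x,\|h_k\|))$ to any self-adjoint lift. Choose the mesh so fine that $\|h_k\|\leq\frac{\pi}{2}\|V_{t_{k+1}}-V_{t_k}\|<\gamma/4$ for all $k$.

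Since $[(w^{(i)}_{t_k})_i]_\omega=V_{t_k}$ exactly, some single index $i$ satisfies the finitely many node estimates up to $\gamma/4$, together with the $H$-estimate at $t_m=1$. Then, for all $t$ and $z\in F$,
\[
    \|w^{(i)}_t\varphi(z)w^{(i)*}_t-\psi(z)\|
    \leq\|w^{(i)}_{t_k}\varphi(z)w^{(i)*}_{t_k}-\psi(z)\|+2\|h^{(i)}_k\|
    <(\varepsilon-\gamma)+\tfrac{\gamma}{4}+\tfrac{\gamma}{2}<\varepsilon.
\]
With this in place, comparing $w^{(i)}_t$ with $V_t$ between nodes is unnecessary. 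Your argument then becomes the paper's, whose key inequality is the analogous $\|r_j\exp(sh_j)-r_j\|\leq\|r_{j+1}-r_j\|$.
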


\begin{proof}
Put
\[
    M:=\sup_{t\in[0,1]}\max_{z\in F}
    \|v_t\varphi(z)v_t^*-\psi(z)\|,
\]
and choose $0<\gamma<1$ such that $M+3\gamma<\varepsilon$.
Since $(v_t)$ is a continuous path, we may choose a partition $0=t_0<\cdots<t_m=1$ such that
\[
    \|v_{t_{j+1}}-v_{t_j}\|<\frac{\gamma}{2}
\]
for all $0\leq j<m$.
Now, we can lift each unitary $\sigma(v_{t_j})\in A_\omega$ to a family of unitaries
$(r_{j,i})_{i\in I}$ in $A$; in particular, we may choose $r_{0,i}=1_A$ for all $i\in I$.

Since $\sigma$ is isometric and its restriction to $A$ is the diagonal embedding, for every $m\in\mathbb N$, we may find an index $i\in I$ such that, writing $r_j:=r_{j,i}$,
\[
    \max_{0\leq j<m}\|r_{j+1}-r_j\|<\gamma, \quad
    \max_{0\leq j\leq m}\max_{z\in F}\|r_j\varphi(z)r_j^*-\psi(z)\|
    <M+\gamma,
\]
and
\[
    \max_{z\in H}\|r_m\varphi(z)r_m^*-\psi(z)\|<\eta.
\]
Next, for each $0\leq j<m$, let $h_j=\log(r_j^*r_{j+1})$
using the principal logarithm, which is well-defined because $-1\not\in\mathrm{sp}(r_j^*r_{j+1})$.
For each $0\leq j< m$, define the continuous path $w^{(j)}\colon[t_j,t_{j+1}] \to \mathcal U(A)$ by 
\[
   [t_j,t_{j+1}]\ni s \mapsto w_s^{(j)}= r_j\exp\bigg(\frac{s-t_j}{t_{j+1}-t_j}h_j\bigg).
\]
Note that each $w^{(j)}$ starts at $r_j$ and ends at $r_{j+1}$, and thus we may define $w\colon [0,1] \to \U(A)$ to be the continuous path that is piecewise given by $w^{(j)}$ on $[t_j,t_{j+1}]$.
Therefore, $w$ starts at $1_A$ and ends at $r_m$.
Functional calculus then gives
\[
    \bigg\|r_j\exp\bigg(\frac{s-t_j}{t_{j+1}-t_j}h_j\bigg)-r_j\bigg\|
    \leq\|r_{j+1}-r_j\|<\gamma.
\]
Moreover, as $F$ lies in the unit ball of $\mathcal Z$, we have that
\[
    \sup_{t\in[0,1]}\max_{z\in F}
    \|w_t\varphi(z)w_t^*-\psi(z)\|
    \leq M+3\gamma<\varepsilon.
\]
The estimate  
\[
    \max_{z\in H}\|w_1\varphi(z)w_1^*-\psi(z)\|<\eta
\]
follows from the fact that $w_1=r_m$.
\end{proof}

\begin{proposition}\label{prop:free-product-asymptotic-uniqueness}
Let $A$ be a unital \Cs-algebra with a faithful tracial
state $\tau$. Suppose that, for some ultrafilter $\omega$
on a set $I$, there exists a unital embedding
\[
    \sigma\colon (A,\tau)*(\calZ,\tau_{\calZ})
    \longrightarrow A_\omega
\]
whose restriction to the first free factor is the diagonal
embedding of $A$.
Then any two unital embeddings $\calZ\to A$ are strongly
asymptotically unitarily equivalent.
\end{proposition}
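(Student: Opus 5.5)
Let $\varphi,\psi\colon\calZ\to A$ be unital embeddings. The plan is to transport the strong asymptotic unitary equivalence of the free factor embeddings from Lemma~\ref{lem:free-Z-path} into $E:=(A,\tau)*(\calZ,\tau_{\calZ})$. Lemma~\ref{lem:transfer-unitary-path} then pulls finite pieces of the path back into $A$, and these pieces are glued into a single path.

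\textbf{Step 1: a path in $E$ from $\varphi$ to $\psi$.} Since $\tau$ is a trace and $\calZ$ has a unique trace, $\tau\circ\varphi=\tau_{\calZ}=\tau\circ\psi$, so $\varphi$ and $\psi$ are state preserving. By \cite[Theorem~1.3]{BD01}, $\varphi*\id_{\calZ}$ and $\psi*\id_{\calZ}$ define unital homomorphisms
\[
(\calZ,\tau_{\calZ})*(\calZ,\tau_{\calZ})\to E .
\]
Their compositions with $\iota_1$ are $\varphi$ and $\psi$, viewed inside $E$, and their compositions with $\iota_2$ are the canonical copy $\kappa\colon\calZ\to E$. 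By Lemma~\ref{lem:free-Z-path}, pushing the path forward shows that $\varphi\sim\kappa$ and $\psi\sim\kappa$ strongly asymptotically in $E$. Composing one path with the adjoint of the other gives a continuous unitary path $(V_t)_{t\ge0}\subseteq\U(E)$ with $V_0=1$ and $V_t\varphi(z)V_t^*\to\psi(z)$ for all $z\in\calZ$.

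\textbf{Step 2: finite pieces in $A$.} Fix an increasing sequence of finite subsets $F_n$ of the unit ball of $\calZ$ with dense union, and set $\varepsilon_n=2^{-n}$. Choose times $T_1<T_2<\cdots\to\infty$ such that, for $t\ge T_n$,
\[
\max_{z\in F_{n+1}}\|V_t\varphi(z)V_t^*-\psi(z)\|<\varepsilon_{n+1}.
\]
The piece of $V$ on $[T_n,T_{n+1}]$ does not start at $1$, so renormalise it. Let $W_s:=V_{T_n+s(T_{n+1}-T_n)}V_{T_n}^*$. This path starts at $1$ and approximately intertwines $\psi':=\operatorname{Ad}(\text{current unitary in }A)\circ\varphi$ with $\psi$, but its endpoint data lives in $E$ rather than $A$. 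The intended fix is to apply Lemma~\ref{lem:transfer-unitary-path} inductively. At stage $n$ we will already hold a unitary $w^{(n)}\in A$ with $\max_{F_n}\|w^{(n)}\varphi(z)w^{(n)*}-\psi(z)\|<\varepsilon_n$. The path $v_s:=V_{\dots}V_{T_n}^*$ is adjusted so that it conjugates $\varphi_n:=\operatorname{Ad}w^{(n)}\circ\varphi$ toward $\psi$. Lemma~\ref{lem:transfer-unitary-path} is then applied to the pair $(\varphi_n,\psi)$ with $F=F_n$, $\varepsilon\approx3\varepsilon_n$, $H=F_{n+1}$ and $\eta=\varepsilon_{n+1}$. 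It returns a path in $A$ starting at $1$, and we multiply it on the right by $w^{(n)}$. Concatenating these segments, starting from $w^{(0)}=1$, gives a continuous path $(w_t)$ in $\U(A)$ with $w_0=1$. Since the sets $F_n$ increase, with dense union, and the errors are uniform on each segment, $w_t\varphi(z)w_t^*\to\psi(z)$ for all $z\in\calZ$.

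\textbf{Main obstacle.} The difficulty is in Step~2. Lemma~\ref{lem:transfer-unitary-path} needs its input path in $E$ to intertwine $\varphi_n$ and $\psi$ uniformly well on the whole interval. At stage $n$, however, $w^{(n)}$ lies in $A$ rather than being the unitary $V_{T_n}$ in $E$.

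The first attempt will be to use the path $s\mapsto V_{T_n+s}V_{T_n}^*u$, which the lemma applied at the previous stage makes close to $w^{(n)}$ on $F_n$. Here $u$ is chosen so that $V_{T_n}^*u$ is close to $w^{(n)}$ modulo the relative commutant of $\varphi(F_n)$. If that fails, the fallback is to run Step~1 directly between $\varphi_n$ and $\psi$. This is possible because Step~1 works for \emph{any} pair of unital embeddings into $A$. The resulting path in $E$ starts at $1$ and, after a long time, intertwines $\varphi_n$ and $\psi$ to within $\varepsilon_{n+1}$ on $F_{n+1}$. For uniform control on $F_n$ over the whole segment, we would instead use the fact that $\varphi_n$ and $\psi$ are already $\varepsilon_n$-close on $F_n$. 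Combining that closeness with a path whose commutator with $\varphi_n(F_n)$ is small should give the required bound, and this is the estimate I expect to need the most care.
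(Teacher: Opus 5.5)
Your Step~1 produces the right object: your $V_t$ is $j_\psi(q_t)^*j_\varphi(q_t)$, where $j_\alpha:=\alpha*\id_{\calZ}$ and $(q_t)_{t\ge0}$ is the path from Lemma~\ref{lem:free-Z-path}. Step~2, however, has a genuine gap, which you flag yourself and do not close.

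Lemma~\ref{lem:transfer-unitary-path} needs an input path with two properties. It must be uniformly good on $F_n$ over the whole segment, \emph{and} its endpoint must be good on a strictly larger set, so that the induction can continue. Renormalising the single path $V$ gives the first property, since $\|W_s\varphi_n(z)W_s^*-\psi(z)\|\le\|\varphi_n(z)-V_{T_n}\varphi(z)V_{T_n}^*\|+\|V_t\varphi(z)V_t^*-\psi(z)\|$. It does not give the second. The endpoint is good on $F_{n+1}$ only if $V_{T_n}^*w^{(n)}$ almost commutes with $\varphi(F_{n+1})$. You only know this on the finite set controlled at the previous stage, so the controlled set never grows.

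You are therefore pushed to your fallback: a fresh path $p^\alpha_t:=j_\psi(q_t)^*j_\alpha(q_t)$ for $\alpha=\varphi_n$, whose endpoint can be made good on any finite set. There, the estimate you expect to need the most care is exactly the missing idea, and with your induction hypothesis it is not available. Your hypothesis is closeness of $\varphi_n$ and $\psi$ on $F_n$ within $\varepsilon_n$. On an initial time interval, $p^\alpha_t$ is approximately a word in first-factor elements that need not lie in $F_n$. So nothing forces $p^\alpha_t$ to almost commute with $\varphi_n(F_n)$ there.

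The paper closes the gap as follows. Two facts are used: $\|p^\alpha_t-1\|=\|j_\alpha(q_t)-j_\psi(q_t)\|$, and the tail bound $\|p^\alpha_t\alpha(z)(p^\alpha_t)^*-\psi(z)\|\le2\|q_t\iota_1(z)q_t^*-\iota_2(z)\|$, which is independent of $\alpha$.
\begin{enumerate}
\item Choose $T_n$ so that this tail bound is below $\varepsilon_n/8$ on $F_n$ for $t\ge T_n$.
\item By compactness of $\{q_t: 0\le t\le T_n\}$, approximate it by finitely many $\ast$-polynomials $b_k$ in the two free copies of $\calZ$. Let $G_n\supseteq F_n$ contain their first-factor letters.
\item Since $j_\alpha$ and $j_\psi$ agree on the second factor, there is $\delta_n<\varepsilon_n/4$ such that $\max_{z\in G_n}\|\alpha(z)-\psi(z)\|<\delta_n$ implies $\sup_{t\le T_n}\|p^\alpha_t-1\|<\varepsilon_n/8$.
\item Hence $\sup_{t\ge0}\max_{z\in F_n}\|p^\alpha_t\alpha(z)(p^\alpha_t)^*-\psi(z)\|<\varepsilon_n/2$, uniformly over all such $\alpha$.
\end{enumerate}
The induction must then carry the stronger hypothesis $\max_{z\in G_n}\|u_n\varphi(z)u_n^*-\psi(z)\|<\delta_n$. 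At stage $n$ one applies Lemma~\ref{lem:transfer-unitary-path} to $(p^{\alpha_n}_{S_ns})_{s\in[0,1]}$ with $F=F_n$, $\varepsilon=\varepsilon_n$, $H=G_{n+1}$ and $\eta=\delta_{n+1}$, choosing $S_n$ so that the endpoint achieves the latter. With $H=F_{n+1}$ and $\eta=\varepsilon_{n+1}$ as in your plan, the next stage has no control over the initial interval $[0,T_{n+1}]$ of its fresh path.
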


\begin{proof}
Let $\varphi,\psi\colon\calZ\to A$ be unital embeddings, and write 
\[
    \iota_1,\iota_2\colon\calZ\to(\calZ,\tau_{\calZ})*(\calZ,\tau_{\calZ})
\]
for the canonical first and second free-factor embeddings. We also denote by $\kappa\colon\calZ\to (A,\tau)*(\calZ,\tau_{\calZ})$ the second
free-factor embedding in this reduced free product, while we omit to write an explicit map for the first free-factor embedding $(A,\tau)\hookrightarrow(A,\tau)*(\calZ,\tau_{\calZ})$, and simply identify $(A,\tau)$ with its isomorphic image.
By Lemma~\ref{lem:free-Z-path}, there exists a norm-continuous path
$(q_t)_{t\geq0}\subseteq\mathcal U((\calZ,\tau_{\calZ})*(\calZ,\tau_{\calZ}))$ such that $q_0=1_{(\calZ,\tau_{\calZ})*(\calZ,\tau_{\calZ})}$ and
\[
    \lim_{t\to\infty}
    \|q_t\iota_1(z)q_t^*-\iota_2(z)\|=0
\]
for all $z\in\calZ$.
Let $\alpha\colon\calZ\to A$ be any unital embedding, and note that, since $\calZ$ has a unique tracial state,
$\tau\circ\alpha=\tau_{\calZ}$.
By \cite[Theorem~1.3]{BD01}, we have a unital embedding
$j_\alpha\colon (\calZ,\tau_{\calZ})*(\calZ,\tau_{\calZ})\to (A,\tau)*(\calZ,\tau_{\calZ})$ satisfying
\[
    j_\alpha\circ\iota_1=\alpha,
    \qquad
    j_\alpha\circ\iota_2=\kappa.
\]
For $t\geq0$, we may therefore define
\[
    p_t^\alpha:=j_\psi(q_t)^*j_\alpha(q_t).
\]
This is a norm-continuous unitary path with $p_0^\alpha=1_{(A,\tau)*(\calZ,\tau_{\calZ})}$.
To sum up, we have defined an assignment 
\[
    [\alpha\colon \calZ \to A] \, \mapsto \, (p^{\alpha}_t)_{t\geq0} \subseteq \mathcal U((A,\tau)*(\calZ,\tau_{\calZ})).
\]
For every $z\in\calZ$, we have that
\begin{align*}
    &\|p_t^\alpha\alpha(z)(p_t^\alpha)^*-\psi(z)\|\\
    &\quad=
    \|j_\alpha(q_t)\alpha(z)j_\alpha(q_t)^*
      -j_\psi(q_t)\psi(z)j_\psi(q_t)^*\|\\
    &\quad\leq
    \|j_\alpha(q_t)\alpha(z)j_\alpha(q_t)^*-\kappa(z)\|
    +\|\kappa(z)-j_\psi(q_t)\psi(z)j_\psi(q_t)^*\|\\
    &\quad=
    \|j_\alpha(q_t\iota_1(z)q_t^*-\iota_2(z))\|
    +\|j_\psi(\iota_2(z)-q_t\iota_1(z)q_t^*)\|\\
    &\quad=
    2\|q_t\iota_1(z)q_t^*-\iota_2(z)\|.
\end{align*}
Consequently,
\begin{equation}\label{eq:free-product-path-convergence}
    \|p_t^\alpha\alpha(z)(p_t^\alpha)^*-\psi(z)\|
    \leq 2\|q_t\iota_1(z)q_t^*-\iota_2(z)\|
    \xrightarrow{t\to\infty}0,
\end{equation}
and the estimate is independent of $\alpha$.

Choose increasing nonempty finite subsets $F_n$ of the
unit ball of $\calZ$ whose union is dense,
and let $(\varepsilon_n)_n\subset(0,1)$ be a decreasing sequence converging to zero.
For each $n\geq1$, choose $T_n>0$ such that
\[
    \sup_{t\geq T_n}\max_{z\in F_n}
    \|q_t\iota_1(z)q_t^*-\iota_2(z)\|
    <\frac{\varepsilon_n}{8}.
\]
We claim that there exist a finite subset $G_n\supseteq F_n$ of the unit
ball of $\calZ$ and $0<\delta_n<\varepsilon_n/4$ such that
every unital embedding $\alpha\colon\calZ\to A$ satisfies
\begin{equation}\label{eq:free-product-initial-control}
    \max_{z\in G_n}\|\alpha(z)-\psi(z)\|<\delta_n
    \quad\Longrightarrow\quad
    \sup_{0\leq t\leq T_n}
    \|j_\alpha(q_t)-j_\psi(q_t)\|
    <\frac{\varepsilon_n}{8}.
\end{equation}
Indeed, compactness of $\{q_t\mid 0\leq t\leq T_n\}$ allows
us to choose finitely many elements $b_1,\ldots,b_m$ in
the unital $\ast$-algebra generated by the two free copies of $\mathcal Z$ inside $(\mathcal Z,\tau_{\calZ})\ast(\calZ,\tau_{\calZ})$
such that
\[
    \min_{1\leq k\leq m}\|q_t-b_k\|
    <\frac{\varepsilon_n}{32}
\]
for all $0\leq t\leq T_n$.
Each $b_k$ is a $\ast$-polynomial in finitely many elements $(z^{(1)}_{k,i})_i$
from the first free-factors of $\mathcal Z$ and $(z^{(2)}_{k,i})_i$ from the second free-factor of $\calZ$. Let $G_n$ be the subset of the unit ball of $\calZ$ containing $F_n$ and  $\bigcup_{k,i}\{z^{(1)}_{k,i}\}$ appropriately normalised to elements of the unit ball.
Since the maps $j_\alpha$ and $j_\psi$ agree on the second free-factor of $\mathcal Z$, there exists a $\delta_n>0$ such that, for any unital embedding $\alpha\colon \calZ \to A$,
\[
    \max_{z\in G_n}\|\alpha(z)-\psi(z)\|<\delta_n
    \quad\Longrightarrow\quad
    \max_{1\leq k\leq m}
    \|j_\alpha(b_k)-j_\psi(b_k)\|
    <\frac{\varepsilon_n}{16}.
\]
Then, it follows that 
\begin{align*}
    \sup_{0\leq t\leq T_n}
    \|j_\alpha(q_t)-j_\psi(q_t)\|
    &\leq
    \frac{\varepsilon_n}{16}
    +\max_{1\leq k\leq m}
    \|j_\alpha(b_k)-j_\psi(b_k)\|\\
    &<\frac{\varepsilon_n}{8}.
\end{align*}
After replacing $\delta_n$ with a smaller positive number to make sure that $\delta_n<\varepsilon_n/4$, we have proved the claim in \eqref{eq:free-product-initial-control}.
Fix now $G_n$ and $\delta_n$ as above, and suppose that $\alpha\colon \calZ \to A$ is a unital embedding satisfying
\[
    \max_{z\in G_n}\|\alpha(z)-\psi(z)\|<\delta_n.
\]
Since
\[
    \|p_t^\alpha-1_{(A,\tau)*(\calZ,\tau_{\calZ})}\|
    =
    \|j_\psi(q_t)^*
      (j_\alpha(q_t)-j_\psi(q_t))\|
    =
    \|j_\alpha(q_t)-j_\psi(q_t)\|,
\]
viewing $\alpha$ as a map with range in $(A,\tau)\ast(\calZ,\tau_{\calZ})$, we obtain
\begin{align*}
    &\sup_{0\leq t\leq T_n}\max_{z\in F_n}
    \|p_t^\alpha\alpha(z)(p_t^\alpha)^*-\psi(z)\|\\
    &\quad\leq
    \sup_{0\leq t\leq T_n}\max_{z\in F_n}
    \|p_t^\alpha\alpha(z)(p_t^\alpha)^*-\alpha(z)\|
    +\max_{z\in F_n}\|\alpha(z)-\psi(z)\|\\
    &\quad\leq
    2\sup_{0\leq t\leq T_n}\|p_t^\alpha-1_{(A,\tau)*(\calZ,\tau_{\calZ})}\|
    +\max_{z\in F_n}\|\alpha(z)-\psi(z)\|\\
    &\quad<
    \frac{\varepsilon_n}{4}+\delta_n
    <\frac{\varepsilon_n}{2}.
\end{align*}
For $t\geq T_n$, \eqref{eq:free-product-path-convergence}
and the choice of $T_n$ give
\[
    \sup_{t\geq T_n}\max_{z\in F_n}
    \|p_t^\alpha\alpha(z)(p_t^\alpha)^*-\psi(z)\|
    <\frac{\varepsilon_n}{4}.
\]
Combining these estimates, we have proved that
\begin{equation}\label{eq:free-product-path-control}
    \sup_{t\geq0}\max_{z\in F_n}
    \|p_t^\alpha\alpha(z)(p_t^\alpha)^*-\psi(z)\|
    <\frac{\varepsilon_n}{2}
\end{equation}
whenever $\alpha$ is a unital embedding with
$\max_{z\in G_n}\|\alpha(z)-\psi(z)\|<\delta_n$.

We now construct the path in $A$ that implements strong asymptotic unitary equivalence between $\varphi$ and $\psi$.
By \eqref{eq:free-product-path-convergence}, choose $S_0>0$
such that
\[
    \max_{z\in G_1}
    \|p_{S_0}^{\varphi}\varphi(z)
       (p_{S_0}^{\varphi})^*-\psi(z)\|<\delta_1.
\]
Apply Lemma~\ref{lem:transfer-unitary-path} to
$(p_{S_0s}^{\varphi})_{s\in[0,1]}$, with
$F=\{1_{\calZ}\}$, $H=G_1$, $\varepsilon=1$, and
$\eta=\delta_1$.
We obtain a norm-continuous path
$(u_t)_{t\in[0,1]}\subseteq\mathcal U(A)$ such that $u_0=1_A$
and
\[
    \max_{z\in G_1}\|u_1\varphi(z)u_1^*-\psi(z)\|<\delta_1.
\]

Suppose inductively that the path has been defined on
$[0,n]$ and satisfies
\[
    \max_{z\in G_n}\|u_n\varphi(z)u_n^*-\psi(z)\|<\delta_n.
\]
Put $\alpha_n:=\operatorname{Ad}(u_n)\circ\varphi$.
By \eqref{eq:free-product-path-convergence}, choose
$S_n$ large enough so that
\[
    \max_{z\in G_{n+1}}
    \|p_{S_n}^{\alpha_n}\alpha_n(z)
       (p_{S_n}^{\alpha_n})^*-\psi(z)\|<\delta_{n+1}.
\]
Using \eqref{eq:free-product-path-control} and the estimate above, we may apply
Lemma~\ref{lem:transfer-unitary-path} to
$(p_{S_ns}^{\alpha_n})_{s\in[0,1]}$, with the embeddings
$\alpha_n,\psi$, the sets $F_n,G_{n+1}$, and the tolerances
$\varepsilon_n,\delta_{n+1}$.
We obtain a norm-continuous path
$(w_{n,s})_{s\in[0,1]}\subseteq\mathcal U(A)$ with
$w_{n,0}=1_A$,
\[
    \sup_{s\in[0,1]}\max_{z\in F_n}
    \|w_{n,s}\alpha_n(z)w_{n,s}^*-\psi(z)\|
    <\varepsilon_n,
\]
and
\[
    \max_{z\in G_{n+1}}
    \|w_{n,1}\alpha_n(z)w_{n,1}^*-\psi(z)\|
    <\delta_{n+1}.
\]
We now extend the path $(u_t)_{t\in[0,n]}$ to a path $(u_t)_{t\in[0,n+1]}$ by setting
\[
    u_{n+s}:=w_{n,s}u_n
\]
for $s\in[0,1]$.
Since $w_{n,0}=1_A$, this extension agrees with the
previous path at $n$.
Moreover,
\[
    u_{n+s}\varphi(z)u_{n+s}^*
    =
    w_{n,s}u_n\varphi(z)u_n^*w_{n,s}^*
    =
    w_{n,s}\alpha_n(z)w_{n,s}^*.
\]
Thus, we may conclude that
\[
    \sup_{t\in[n,n+1]}\max_{z\in F_n}
    \|u_t\varphi(z)u_t^*-\psi(z)\|<\varepsilon_n,
\]
and that
\[
    \max_{z\in G_{n+1}}
    \|u_{n+1}\varphi(z)u_{n+1}^*-\psi(z)\|<\delta_{n+1},
\]
which is the next induction step.

The resulting path $(u_t)_{t\geq0}$ is norm continuous
and starts at $1_A$.
Since the sets $F_n$ are increasing with dense union in
the unit ball and $\varepsilon_n\to0$, 
\[
    \lim_{t\to\infty}\|u_t\varphi(z)u_t^*-\psi(z)\|=0
\]
for all $z\in\calZ$.
This proves the proposition.
\end{proof}

We are now ready to prove Theorem \ref{introthm:asymptotic-uniqueness}.
In the purely infinite part of the following proof, we will follow an approach inspired by Lin's work, particularly \cite[Theorem 12.5]{Lin09} and \cite[Theorem 10.5]{Lin11}. In particular, we will need to use a few facts about Kasparov's $KK$-theory \cite{Kasparov} and Skandalis' $KK_{\mathrm{nuc}}$ \cite{Skandalis}, in order to apply Gabe's existence and uniqueness theorems for maps from \cite{Gab24}. 
To know more about these topics, we refer to the aforementioned articles, as well as to \cite{Bla98,JT91} for an introduction to the basic notions of $KK$-theory.
Suffice it to say that, for a separable, unital \Cs-algebra $A$ and a unital \Cs-algebra $B$, a $\ast$-homomorphism $\varphi\colon A \to B$ induces an element $KK(\varphi)$ of the Abelian group $KK(A,B)$, and if $A$ or $B$ is nuclear, then $KK(A,B)\cong KK_{\mathrm{nuc}}(A,B)$.
If $C$ is another separable, unital \Cs-algebra, a theorem of Kasparov \cite{Kasparov} shows that there exists a well-defined Kasparov product $\otimes_C\colon KK(A,C)\times KK(C,B) \to KK(A,B)$, which is, in particular, an associative bilinear map that is compatible with composition of $\ast$-homomorphisms.
If a $KK$-element admits a left- and right-inverse for the Kasparov product, it is referred to as a $KK$-equivalence.
We will also use the following fact.
Under the canonical identification, one has $KK(\mathbb C,B)\cong K_0(B)$, and thus the Kasparov product induces canonical homomorphisms $$\Gamma_i\colon KK(A,B) \to \mathrm{Hom}(K_i(A),K_i(B)) \quad \text{for $i=0,1$}$$
that satisfy $\Gamma_i(KK(\varphi))=K_i(\varphi)$ for any $\ast$-homomorphism $\varphi\colon A\to B$, and $\Gamma_i(x \otimes_C y)=\Gamma_i(y) \circ \Gamma_i(x)$ for all $x\in KK(A,C)$ and $y\in KK(C,B)$.

\begin{proof}[Proof of Theorem \ref{introthm:asymptotic-uniqueness}]

Assume that $(A,\rho)$ is a tracial, selfless \Cs-probability space. Then a straightforward combination of Lemma~\ref{lem:product-Z} with
Proposition~\ref{prop:free-product-asymptotic-uniqueness} proves that any pair of unital emebddings $\calZ \to A$ are strongly asymptotically unitarily equivalent.

Assume now that $\rho$ is not a trace. Then $A$ is purely infinite and simple; this follows from \cite[Theorem 3.1]{Rob25} when $\rho$ is faithful, or \cite[Theorem 1]{Gould26} when $\rho$ is not faithful.

Let $\varphi,\psi\colon\calZ \to A$ be any pair of unital embeddings.
Since the scalar map $\iota\colon \mathbb C \to \calZ$ induces a $KK$-equivalence in $KK(\mathbb C,\calZ)$ by \cite[Lemma 2.11]{JS99}, the following map is an isomorphism 
\[
    \iota^*\colon KK(\calZ,A) \to KK(\mathbb C,A),\quad x\mapsto KK(\iota)\otimes_{\calZ}x.
\]
Hence, it follows from unitality of $\varphi$ and $\psi$ that they induce the same class in $KK(\calZ,A)$. Indeed, 
\[
    \iota^*(KK(\varphi))=KK(\iota) \otimes_{\calZ}KK(\varphi)=KK(\varphi \circ \iota)=KK(\psi \circ \iota)=\iota^*(KK(\psi)),
\]
and applying the inverse of $\iota^*$ yields $KK(\varphi)=KK(\psi)$.
Next, nuclearity of $\calZ$ gives that $KK(\calZ,A)\cong KK_{\mathrm{nuc}}(\calZ,A)$, and that $\varphi$, $\psi$ are nuclear maps.
Thus, combined with the fact that $A$ is simple and purely infinite, we may infer with \cite[Corollary 9.8]{Gab24} that $\varphi$ and $\psi$ are strongly $\cal{O}_\infty$-stable $\ast$-homomorphisms. 
Therefore, Gabe's uniqueness theorem, \cite[Theorem B]{Gab24}, yields a norm-continuous path of unitaries $(r_t)_{t\geq0} \subseteq \mathcal U(A)$ such that 
\[
    \lim_{t\to\infty}\|r_t\varphi(z)r_t^*-\psi(z)\|=0\quad \text{for all }z\in\calZ.
\]
If $r_0=1_A$, then the proof is concluded. 
Hence, from now on we assume that $r_0\neq 1_A$.

Using again \cite[Lemma 2.11]{JS99}, the second-factor embedding $1_{\calZ}\otimes\id_{C(\mathbb T)}\colon C(\mathbb T) \to \calZ\otimes C(\mathbb T)$ induces a $KK$-equivalence in $KK_{\mathrm{nuc}}(C(\mathbb T),\calZ\otimes C(\mathbb T))$.
Let now $\eta\colon C(\mathbb T)\to A$ be the unital $\ast$-homomorphism determined by the assignment $\eta(\id_{\mathbb T})=r_0^*$, and define the $KK$-element 
\[
    x=KK_{\mathrm{nuc}}(1_{\calZ}\otimes\id_{C(\mathbb T)})^{-1} \otimes_{C(\mathbb T)} KK_{\mathrm{nuc}}(\eta) \, \in KK_{\mathrm{nuc}}(\calZ\otimes C(\mathbb T),A).
\]
Clearly, we have that $KK_{\mathrm{nuc}}(1_{\calZ}\otimes\id_{C(\mathbb T)}) \otimes_{\calZ\otimes C(\mathbb T)} x=KK_{\mathrm{nuc}}(\eta)$, and therefore we deduce that 
\begin{align*}
    \Gamma_0(x)\big([1_{\calZ}\otimes 1_{C(\mathbb T)}]_0 \big) &=\Gamma_0(x)\big(K_0(1_{\calZ}\otimes\id_{C(\mathbb T)})([1_{C(\mathbb T)}]_0)\big) \\
    &=\big(\Gamma_0(x)\circ \Gamma_0(KK_{\mathrm{nuc}}(1_{\calZ}\otimes\id_{C(\mathbb T)}))\big)([1_{C(\mathbb T)}]_0) \\
    &=\big(\Gamma_0(KK_{\mathrm{nuc}}(1_{\calZ}\otimes\id_{C(\mathbb T)}) \otimes_{\calZ\otimes C(\mathbb T)} x)\big)([1_{C(\mathbb T)}]_0) \\
    &= K_0(\eta)([1_{C(\mathbb T)}]_0)\\
    &=[1_A]_0.
\end{align*}
It then follows from Gabe's existence theorem, \cite[Theorem A]{Gab24}, that there exists a (strongly $\cal{O}_\infty$-stable) unital $\ast$-homomorphism $\Theta\colon\calZ \otimes C(\mathbb T) \to A$ such that $KK_{\mathrm{nuc}}(\Theta)=x$.
We use $\Theta$ to define the unital $\ast$-homomorphism
\[
    \theta\colon \calZ \to A, \quad \theta=\Theta \circ (\id_{\calZ}\otimes 1_{C(\mathbb T)}),
\]
and set $v=\Theta(1_{\calZ}\otimes\id_{\mathbb T})\in \U(A) \cap \theta(\calZ)'$.
Since  
\[
    KK_{\mathrm{nuc}}(\Theta \circ (1_{\calZ}\otimes\id_{C(\mathbb T)}))=KK_{\mathrm{nuc}}(1_{\calZ}\otimes\id_{C(\mathbb T)}) \otimes_{\calZ\otimes C(\mathbb T)}x=KK_{\mathrm{nuc}}(\eta),
\]
we have that 
\begin{align*}
    [v]_1&=K_1(\Theta \circ (1_{\calZ}\otimes\id_{C(\mathbb T)}))([\id_{\mathbb T}]_1) \\
    &=K_1(\eta)([\id_{\mathbb T}]_1) \\
    &=[r_0^*]_1\\
    &=-[r_0]_1.
\end{align*}
We now apply uniqueness (i.e., \cite[Theorem B]{Gab24}) once more, but this time to the maps $\varphi$ and $\theta$, which we may because $KK_{\mathrm{nuc}}(\varphi)=KK_{\mathrm{nuc}}(\theta)$ from unitality. This yields a norm-continuous path of unitaries $(w_t)_{t\geq0} \subset \U(A)$ such that 
\[
    \lim_{t\to\infty}\|w_t\theta(z)w_t^*-\varphi(z)\|=0\quad \text{for all }z\in\calZ.
\]
Choose $v_t=w_tvw_t^*$, and note that $[v_t,w_t\theta(z)w_t^*]=0$ for all $z\in\calZ$ because $v\in\U(A)\cap\theta(\calZ)'$. Then, we have that 
\begin{align*}
    \|[v_t,\varphi(z)]\|=\|[v_t,\varphi(z)-w_t\theta(z)w_t^*]\| 
    \leq 2\|\varphi(z)-w_t\theta(z)w_t^*\| \xrightarrow{t\to\infty}0
\end{align*}
for all $z\in\calZ$. Also, $[v_t]_1=[v]_1=-[r_0]_1$.
To conclude, choose $u_t=r_tv_t \in\U(A)$, which determines a norm-continuous unitary path.
Observe that 
\begin{align*}
    \|u_t\varphi(z)u_t^*-\psi(z)\| &\leq \|r_t(v_t\varphi(z)v_t^*-\varphi(z))r_t^*\| + \|r_t\varphi(z)r_t^*-\psi(z)\| \\
    &=\|[v_t,\varphi(z)]\|+\|r_t\varphi(z)r_t^*-\psi(z)\| \xrightarrow{t\to\infty}0
\end{align*}
for all $z\in\calZ$. 
Moreover, $[u_0]_1=[r_0v_0]_1=[r_0]_1+[v_0]_1=0$. Since $A$ is purely infinite and simple, \cite[Theorem 1.9]{Cun81} gives that $\U(A)/\U_0(A)\cong K_1(A)$, and we conclude that $u_0$ is connected to $1_A$ by a norm-continuous path of unitaries. Attaching $(u_t)_{t\geq0}$ to the path that connects $u_0$ to the unit, we obtain a unitary path witnessing strong asymptotic unitary equivalence between $\varphi$ and $\psi$, as desired. 
\end{proof}

\bibliography{references.bib}

@article{PP13,
  author  = {Petrovi{\'c}, Zoran Z. and
             Prvulovi{\'c}, Branislav I.},
  title   = {Equivariant maps between complex {Stiefel} manifolds},
  journal = {Osaka J. Math.},
  volume  = {50},
  number  = {1},
  year    = {2013},
  pages   = {187--196}
}

@article{Rob12,
  author  = {Robert, Leonel},
  title   = {Classification of inductive limits of
             1-dimensional {NCCW} complexes},
  journal = {Adv. Math.},
  volume  = {231},
  number  = {5},
  pages   = {2802--2836},
  year    = {2012}
}

@article{APS11,
  author  = {Antoine, Ramon and Perera, Francesc
             and Santiago, Luis},
  title   = {Pullbacks, {$C(X)$}-algebras,
             and their {Cuntz} semigroup},
  journal = {J. Funct. Anal.},
  volume  = {260},
  number  = {10},
  pages   = {2844--2880},
  year    = {2011}
}

@article{ERS11,
  author  = {Elliott, George A. and Robert, Leonel and Santiago, Luis},
  title   = {The cone of lower semicontinuous traces on a {\Cs}-algebra},
  journal = {Amer. J. Math.},
  volume  = {133},
  number  = {4},
  year    = {2011},
  pages   = {969--1005}
}

@article{ABPP14,
  author  = {Antoine, Ramon and Bosa, Joan
             and Perera, Francesc and Petzka, Henning},
  title   = {Geometric structure of dimension functions
             of certain continuous fields},
  journal = {J. Funct. Anal.},
  volume  = {266},
  number  = {4},
  pages   = {2403--2423},
  year    = {2014}
}

@article{DW09,
  author  = {Dadarlat, Marius and Winter, Wilhelm},
  title   = {On the {$KK$}-theory of strongly
             self-absorbing {\Cs}-algebras},
  journal = {Math. Scand.},
  volume  = {104},
  number  = {1},
  pages   = {95--107},
  year    = {2009}
}

@article{FG,
  author  = {Fima, Pierre and Germain, Emmanuel},
  title   = {The {$KK$}-theory of amalgamated free products},
  journal = {Adv. Math.},
  volume  = {369},
  note   = {article 107174},
  year    = {2020}
}

@article{Rob25,
  author  = {Robert, Leonel},
  title   = {Selfless {\Cs}-algebras},
  journal = {Adv. Math.},
  volume  = {478},
  note   = {article 110409},
  year    = {2025}
}

@article{APT18,
  author  = {Antoine, Ramon and Perera, Francesc and
             Thiel, Hannes},
  title   = {Tensor products and regularity properties
             of {Cuntz} semigroups},
  journal = {Mem. Amer. Math. Soc.},
  volume  = {251},
  number  = {1199},
  year    = {2018}
}

@article{GP26,
  author  = {Gardella, Eusebio and Perera, Francesc},
  title   = {The modern theory of {Cuntz} semigroups
             of {\Cs}-algebras},
  journal = {EMS Surv. Math. Sci.},
  volume  = {13},
  number  = {1},
  year    = {2026},
  pages   = {133--214}
}

@article{Rie83,
  title={Dimension and stable rank in the $K$-theory of \Cs-algebras},
  author={Rieffel, Marc A},
  journal={Proc. London Math. Soc.},
  volume={3},
  number={2},
  pages={301--333},
  year={1983},
  publisher={Wiley Online Library}
}

@article{Win11,
  author    = {Wilhelm Winter},
  title     = {Strongly self-absorbing \Cs-algebras are $\mathcal{Z}$-stable},
  journal   = {J. Noncommut. Geom.},
  volume    = {5},
  number    = {2},
  pages     = {253--264},
  year      = {2011},
}

@article{Cun77,
  title={Simple \Cs-algebraa generated by isometries},
  author={Cuntz, Joachim},
  journal={Comm. Math. Phys.},
  volume={57},
  number={2},
  pages={173--185},
  year={1977},
  publisher={Springer}
}

@article{Haa14,
  author        = {Haagerup, Uffe},
  title         = {Quasitraces on exact \Cs-algebras are traces},
  journal       = {C. R. Math. Rep. Acad. Sci. Canada},
  volume        = {36},
  number        = {2-3},
  pages         = {67--92},
  year          = {2014},
}

@article{Gab24,
  author  = {Gabe, James},
  title   = {Classification of {$\mathcal O_\infty$}-stable {\Cs}-algebras},
  journal = {Mem. Amer. Math. Soc.},
  volume  = {293},
  number  = {1461},
  year    = {2024},
  pages   = {v+115}
}

@article{Cun81,
  author  = {Cuntz, Joachim},
  title   = {{$K$}-theory for certain {\Cs}-algebras},
  journal = {Ann. of Math. (2)},
  volume  = {113},
  number  = {1},
  year    = {1981},
  pages   = {181--197}
}

@incollection{Kir95,
  author    = {Kirchberg, Eberhard},
  title     = {Exact {\Cs}-algebras, tensor products, and the
               classification of purely infinite algebras},
  editor    = {Chatterji, S. D.},
  booktitle = {Proc. ICM, Vol.~1, 2 ({Z\"urich}, 1994)},
  publisher = {Birkh{\"a}user},
  address   = {Basel},
  year      = {1995},
  pages     = {943--954}
}

@article{Gab20,
  author  = {Gabe, James},
  title   = {A new proof of {Kirchberg}'s
             {$\mathcal O_2$}-stable classification},
  journal = {J. Reine Angew. Math.},
  volume  = {761},
  year    = {2020},
  pages   = {247--289}
}

@article{TW09,
  author  = {Toms, Andrew S. and Winter, Wilhelm},
  title   = {The {Elliott} conjecture for {Villadsen} algebras of the first type},
  journal = {J. Funct. Anal.},
  volume  = {256},
  number  = {5},
  year    = {2009},
  pages   = {1311--1340},
}

@article{Vil98,
  author  = {Villadsen, Jesper},
  title   = {Simple {\Cs}-algebras with perforation},
  journal = {J. Funct. Anal.},
  volume  = {154},
  number  = {1},
  year    = {1998},
  pages   = {110--116},
}

@misc{BG24,
  author = {Bouwen, Ben and Gabe, James},
  title  = {A unified approach for classifying simple nuclear
            {\Cs}-algebras},
  year   = {2024},
  note   = {preprint},
  url    = {https://arxiv.org/abs/2412.15968}
}

@article{KP00,
  author    = {Kirchberg, Eberhard and Phillips, N. Christopher},
  title     = {Embedding of exact {\Cs}-algebras in the
               {Cuntz} algebra {$\mathcal O_2$}},
  journal   = {J. Reine Angew. Math.},
  volume    = {525},
  year      = {2000},
  pages     = {17--53}
}

@article{Phi00,
  author    = {Phillips, N. Christopher},
  title     = {A classification theorem for nuclear purely infinite
               simple {\Cs}-algebras},
  journal   = {Doc. Math.},
  volume    = {5},
  year      = {2000},
  pages     = {49--114}
}

@incollection{Boc97,
  author    = {Boca, Florin P.},
  title     = {A note on full free product {\Cs}-algebras,
               lifting and quasidiagonality},
  booktitle = {Operator Theory, Operator Algebras and Related Topics
               ({Timi\c{s}oara}, 1996)},
  publisher = {Theta Foundation},
  address   = {Bucharest},
  year      = {1997},
  pages     = {51--63},
  url       = {https://arxiv.org/abs/1508.03123}
}

@incollection{Bla88,
    booktitle={Operator Algebras and Applications}, 
  title={Comparison theory for simple \Cs-algebras},
  author={Blackadar, Bruce},
  journal={London Math. Soc. Lecture Note Ser. Cambridge Univ. Press, Cambridge},
  number={1},
volume={135},
  pages={21--54},
  year={1988}
}

@article{DR98,
  author  = {Dykema, Kenneth J. and R{\o}rdam, Mikael},
  title   = {Projections in free product {\Cs}-algebras},
  journal = {Geom. Funct. Anal.},
  volume  = {8},
  number  = {1},
  year    = {1998},
  pages   = {1--16}
}

@article{DR00,
  author  = {Dykema, Kenneth J. and R{\o}rdam, Mikael},
  title   = {Projections in free product {\Cs}-algebras, {II}},
  journal = {Math. Z.},
  volume  = {234},
  number  = {1},
  year    = {2000},
  pages   = {103--113}
}

@article{Pop95,
  author  = {Popa, Sorin},
  title   = {Free-independent sequences in type {$\mathrm{II}_1$}
             factors and related problems},
  journal = {Ast{\'e}risque},
  number  = {232},
  year    = {1995},
  pages   = {187--202}
}

@article{Ger97,
  author  = {Germain, Emmanuel},
  title   = {{$KK$}-theory of the full free product
             of unital {\Cs}-algebras},
  journal = {J. reine angew. Math.},
  volume  = {485},
  pages   = {1--10},
  year    = {1997}
}

@article{Tho03,
  author  = {Thomsen, Klaus},
  title   = {On the {$KK$}-theory and the {$E$}-theory
             of amalgamated free products of {\Cs}-algebras},
  journal = {J. Funct. Anal.},
  volume  = {201},
  number  = {1},
  pages  = {30--56},
  year={2003},
}

@article{Cun78,
  author  = {Cuntz, Joachim},
  title   = {Dimension functions on simple {\Cs}-algebras},
  journal = {Math. Ann.},
  volume  = {233},
  number  = {2},
  year    = {1978},
  pages   = {145--153},
}

@article{CEI08,
  author  = {Coward, Kristofer T. and Elliott, George A.
             and Ivanescu, Cristian},
  title   = {The {Cuntz} semigroup as an invariant
             for {\Cs}-algebras},
  journal = {J. reine angew. Math.},
  volume  = {623},
  year    = {2008},
  pages   = {161--193},
}

@article{JS99,
  author  = {Jiang, Xinhui and Su, Hongbing},
  title   = {On a simple unital projectionless {\Cs}-algebra},
  journal = {Amer. J. Math.},
  volume  = {121},
  number  = {2},
  year    = {1999},
  pages   = {359--413},
}

@misc{HKEPR25,
  author        = {Hayes, Ben and Kunnawalkam Elayavalli, Srivatsav
                   and Patchell, Gregory and Robert, Leonel},
  title         = {Selfless inclusions of {\Cs}-algebras},
  year          = {2025},
  journal = {J. Funct. Anal.},
  note        = {to appear in Acta Math.},
  url = "https://arxiv.org/pdf/2510.13398",
}

@article{BD01,
  author  = {Blanchard, Etienne F. and Dykema, Kenneth J.},
  title   = {Embeddings of reduced free products of operator algebras},
  journal = {Pacific J. Math.},
  volume  = {199},
  number  = {1},
  year    = {2001},
  pages   = {1--19}
}

@article{BS16,
  author  = {Barlak, Sel{\c{c}}uk and Szab{\'o}, G{\'a}bor},
  title   = {Sequentially split {$\ast$}-homomorphisms between
             {\Cs}-algebras},
  journal = {Internat. J. Math.},
  volume  = {27},
  number  = {13},
  year    = {2016},
  note   = {article 1650105}
}

@article{RW10,
  author  = {R{\o}rdam, Mikael and Winter, Wilhelm},
  title   = {The {Jiang--Su} algebra revisited},
  journal = {J. reine angew. Math.},
  volume  = {642},
  year    = {2010},
  pages   = {129--155},
}

@article{Ell07,
  author  = {Elliott, George A.},
  title   = {{Hilbert} modules over a {\Cs}-algebra of stable rank one},
  journal = {C. R. Math. Acad. Sci. Soc. R. Can.},
  volume  = {29},
  number  = {2},
  year    = {2007},
  pages   = {48--51},
}

@article{FHLRTVW21,
  author  = {Farah, Ilijas and Hart, Bradd and Lupini, Martino
             and Robert, Leonel and Tikuisis, Aaron
             and Vignati, Alessandro and Winter, Wilhelm},
  title   = {Model theory of {\Cs}-algebras},
  journal = {Mem. Amer. Math. Soc.},
  volume  = {271},
  number  = {1324},
  year    = {2021},
  pages   = {viii+127}
}

@article{Rob26,
  title={Strict comparison and selflessness},
  author={Robert, Leonel},
  journal={arXiv preprint arXiv:2609.15776},
  year={2026}
}

@article{Ror04,
  author  = {R{\o}rdam, Mikael},
  title   = {The stable and the real rank of {$\mathcal Z$}-absorbing {\Cs}-algebras},
  journal = {Internat. J. Math.},
  volume  = {15},
  number  = {10},
  year    = {2004},
  pages   = {1065--1084}
}

@article{CETWW21,
  author  = {Castillejos, Jorge and Evington, Samuel and Tikuisis, Aaron and White, Stuart and Winter, Wilhelm},
  title   = {Nuclear dimension of simple {\Cs}-algebras},
  journal = {Invent. Math.},
  volume  = {224},
  number  = {1},
  year    = {2021},
  pages   = {245--290}
}

@article{Win14,
  author  = {Winter, Wilhelm},
  title   = {Localizing the {Elliott} conjecture at strongly self-absorbing {\Cs}-algebras},
  journal = {J. reine angew. Math.},
  volume  = {692},
  year    = {2014},
  pages   = {193--231},
  note    = {With an appendix by Huaxin Lin}
}

@article{Win16,
  author  = {Winter, Wilhelm},
  title   = {Classifying crossed product {\Cs}-algebras},
  journal = {Amer. J. Math.},
  volume  = {138},
  number  = {3},
  year    = {2016},
  pages   = {793--820}
}

@article{TWW17,
  author  = {Tikuisis, Aaron and White, Stuart and Winter, Wilhelm},
  title   = {Quasidiagonality of nuclear {\Cs}-algebras},
  journal = {Ann. of Math. (2)},
  volume  = {185},
  number  = {1},
  year    = {2017},
  pages   = {229--284}
}

@article{GLN20a,
  author  = {Gong, Guihua and Lin, Huaxin and Niu, Zhuang},
  title   = {A classification of finite simple amenable {$\mathcal Z$}-stable {\Cs}-algebras, {I}: {\Cs}-algebras with generalized tracial rank one},
  journal = {C. R. Math. Acad. Sci. Soc. R. Can.},
  volume  = {42},
  number  = {3},
  year    = {2020},
  pages   = {63--450}
}

@article{GLN20b,
  author  = {Gong, Guihua and Lin, Huaxin and Niu, Zhuang},
  title   = {A classification of finite simple amenable {$\mathcal Z$}-stable {\Cs}-algebras, {II}: {\Cs}-algebras with rational generalized tracial rank one},
  journal = {C. R. Math. Acad. Sci. Soc. R. Can.},
  volume  = {42},
  number  = {4},
  year    = {2020},
  pages   = {451--539}
}

@article{EGLN25,
  author  = {Elliott, George A. and Gong, Guihua and Lin, Huaxin and Niu, Zhuang},
  title   = {On the classification of simple amenable {\Cs}-algebras with finite decomposition rank, {II}},
  journal = {J. Noncommut. Geom.},
  volume  = {19},
  number  = {1},
  year    = {2025},
  pages   = {73--104}
}

@misc{CGSTW23,
  author  = {Carri{\'o}n, Jos{\'e} R. and Gabe, James and Schafhauser, Christopher and Tikuisis, Aaron and White, Stuart},
  title   = {Classifying {$*$}-homomorphisms {I}: Unital simple nuclear {\Cs}-algebras},
  year    = {2023},
  journal    = {Acta Math.},
  note = {to appear in Acta Math.},
  url     = {https://arxiv.org/abs/2307.06480}
}

@article{Avi82,
  author  = {Avitzour, Daniel},
  title   = {Free products of {\Cs}-algebras},
  journal = {Trans. Amer. Math. Soc.},
  volume  = {271},
  number  = {2},
  year    = {1982},
  pages   = {423--435}
}

@book{Gol23,
  editor    = {Goldbring, Isaac},
  title     = {Model Theory of Operator Algebras},
  series    = {De Gruyter Ser. Log. Appl.},
  volume    = {11},
  publisher = {De Gruyter},
  address   = {Berlin--Boston},
  year      = {2023}
}

@incollection{Voi85,
  author    = {Voiculescu, Dan},
  title     = {Symmetries of some reduced free product {\Cs}-algebras},
  booktitle = {Operator Algebras and their Connections with
               Topology and Ergodic Theory
               (Bu{\c{s}}teni, 1983)},
  series    = {Lecture Notes in Math.},
  volume    = {1132},
  publisher = {Springer},
  address   = {Berlin},
  year      = {1985},
  pages     = {556--588}
}

@article{AGKP25,
  author  = {Amrutam, Tattwamasi and Gao, David and
             Kunnawalkam Elayavalli, Srivatsav and Patchell, Gregory},
  title   = {Strict comparison in reduced group {\Cs}-algebras},
  journal = {Invent. Math.},
  volume  = {242},
  number  = {3},
  year    = {2025},
  pages   = {639--657}
}

@misc{HKR25,
  author = {Hayes, Ben and Kunnawalkam Elayavalli, Srivatsav and
            Robert, Leonel},
  title  = {Selfless reduced free product {\Cs}-algebras},
  year   = {2025},
  note   = {preprint},
  url    = {https://arxiv.org/abs/2505.13265}
}

@misc{Ozawa25,
  author = {Ozawa, Narutaka},
  title  = {Proximality and selflessness for group {\Cs}-algebras},
  year   = {2025},
  note   = {preprint},
  url    = {https://arxiv.org/abs/2508.07938}
}

@misc{Vig25,
  author = {Vigdorovich, Itamar},
  title  = {Structural properties of reduced {\Cs}-algebras
            associated with higher-rank lattices},
  year   = {2025},
  note   = {preprint},
  url    = {https://arxiv.org/abs/2503.12737}
}

@article{Vig26,
  author  = {Vigdorovich, Itamar},
  title   = {Selfless reduced {\Cs}-algebras of linear groups},
  journal = {Proc. Lond. Math. Soc.},
  volume  = {133},
  number  = {1},
  year    = {2026},
  pages   = {e70180}
}

@misc{RTV25,
  author = {Raum, Sven and Thiel, Hannes and Vilalta, Eduard},
  title  = {Strict comparison for twisted group {\Cs}-algebras},
  year   = {2025},
  note   = {preprint},
  url    = {https://arxiv.org/abs/2505.18569}
}

@article{FFKP26a,
  author  = {Flores, Felipe and Klisse, Mario and
             {\'{O} Cobhthaigh}, M{\'i}che{\'a}l and Pagliero, Matteo},
  title   = {Selfless reduced free products and graph products
             of {\Cs}-algebras},
  journal = {Adv. Math.},
  volume  = {502},
  number  = {C},
  year    = {2026},
  note   = {article 111190}
}

@misc{FFKP26b,
  author = {Flores, Felipe and Klisse, Mario and
            {\'{O} Cobhthaigh}, M{\'i}che{\'a}l and Pagliero, Matteo},
  title  = {Pureness and stable rank one for reduced twisted group
            {\Cs}-algebras of certain group extensions},
  year   = {2026},
  note   = {preprint},
  url    = {https://arxiv.org/abs/2601.19758}
}

@misc{KPT25,
  author = {Kunnawalkam Elayavalli, Srivatsav and Patchell, Gregory
            and Teryoshin, Lizzy},
  title  = {Some remarks on decay in countable groups and
            amalgamated free products},
  year   = {2025},
  note   = {preprint},
  url    = {https://arxiv.org/abs/2509.08754}
}

@misc{GKPT26,
  author = {Gao, David and Kunnawalkam Elayavalli, Srivatsav and
            Patchell, Gregory and Teryoshin, Lizzy},
  title  = {Selfless reduced amalgamated free products and
            {HNN} extensions},
  year   = {2026},
  note   = {preprint},
  url    = {https://arxiv.org/abs/2604.06982}
}

@misc{BF26,
  author = {Basu, Aaratrick and Flores, Felipe},
  title  = {Selfless inclusions arising from commensurator groups
            of hyperbolic groups},
  year   = {2026},
  note   = {preprint},
  url    = {https://arxiv.org/abs/2605.12737}
}

@misc{Omland26,
  author = {Omland, Tron},
  title  = {Selflessness for twisted group {\Cs}-algebras of
            amenable groups and their inclusions},
  year   = {2026},
  note   = {preprint},
  url    = {https://arxiv.org/abs/2606.27198}
}

@misc{Gould26,
  author = {Gould, Miles},
  title  = {The selfless dichotomy},
  year   = {2026},
  note   = {preprint},
  url    = {https://arxiv.org/abs/2606.09654}
}

@misc{GJKPR26,
  author = {Gao, David and Junge, Marius and
            Kunnawalkam Elayavalli, Srivatsav and Patchell, Gregory
            and Robert, Leonel},
  title  = {Selfless {\Cs}-correspondences, operator valued
            {\Cs}-probability spaces and completely positive maps},
  year   = {2026},
  note   = {preprint},
  url    = {https://arxiv.org/abs/2607.20361}
}

@misc{Arimoto26,
  author = {Arimoto, Ryoya},
  title  = {{\Cs}-selflessness of vigorous groups},
  year   = {2026},
  note   = {preprint},
  url    = {https://arxiv.org/abs/2609.29399}
}

@misc{FG26boundary,
  author = {Flores, Felipe and Gondek, Joseph},
  title  = {Simplicity and pure infiniteness for {\Cs}-algebras
            associated with stabilizers of boundary actions},
  year   = {2026},
  note   = {preprint},
  url    = {https://arxiv.org/abs/2608.30127}
}

@misc{STW25,
  author = {Schafhauser, Christopher and Tikuisis, Aaron and
            White, Stuart},
  title  = {Nuclear {\Cs}-algebras: 99 problems},
  year   = {2025},
  journal = {Münster J. Math.},
  note   = {to appear in Münster J. Math.},
  url    = {https://arxiv.org/abs/2506.10902}
}

@misc{Yang25,
  author = {Yang, Wenyuan},
  title  = {An extreme boundary of acylindrically hyperbolic groups},
  year   = {2025},
  note   = {preprint},
  url    = {https://arxiv.org/abs/2511.16400}
}

@article{Lin09,
  author = {Lin, Huaxin},
  title = {Asymptotically unitary equivalence and asymptotically inner automorphisms},
  journal = {Amer. J. Math.},
  volume = {131},
  number = {6},
  year = {2009},
  pages = {1589--1677},
}

@article{Kasparov,
  author = {Kasparov, Gennadi G.},
  title = {The operator {$K$}-functor and extensions of {\Cs}-algebras},
  journal = {Izv. Akad. Nauk SSSR Ser. Mat.},
  volume = {44},
  number = {3},
  year = {1980},
  pages = {571--636},
}

@book{Bla98,
  author = {Blackadar, Bruce},
  title = {{$K$}-theory for operator algebras},
  edition = {2},
  series = {Mathematical Sciences Research Institute Publications},
  volume = {5},
  publisher = {Cambridge University Press},
  address = {Cambridge},
  year = {1998},
}

@book{JT91,
  author = {Jensen, Kjeld Knudsen and Thomsen, Klaus},
  title = {Elements of {$KK$}-theory},
  series = {Mathematics: Theory \& Applications},
  publisher = {Birkh{\"a}user},
  address = {Boston, MA},
  year = {1991},
}

@article{Skandalis,
  author = {Skandalis, Georges},
  title = {Une notion de nucl{\'e}arit{\'e} en {$K$}-th{\'e}orie (d'apr{\`e}s {J. Cuntz})},
  journal = {$K$-Theory},
  volume = {1},
  number = {6},
  year = {1988},
  pages = {549--573},
}

@article{Lin11,
  author = {Lin, Huaxin},
  title = {Asymptotic unitary equivalence and classification of simple amenable {\Cs}-algebras},
  journal = {Invent. Math.},
  volume = {183},
  number = {2},
  year = {2011},
  pages = {385--450},
}
\bibliographystyle{mybstnum.bst}

\end{document}